\newtheorem{theorem}{Theorem}[section]
\newtheorem{lemma}[theorem]{Lemma}
\newtheorem{proposition}[theorem]{Proposition}
\newtheorem{claim}[theorem]{Claim}
\newtheorem{remark}[theorem]{Remark}
\def\PP{{\mathbb P}}
\def\NN{{\mathbb N}}      
\newcommand{\norm}[1]{\left\Vert #1 \right\Vert} 
\DeclareMathOperator{\diam}{diam}
\begin{document}
\bibliographystyle{plainnat}


\title[Sharp convergence rates for averaged nonexpansive maps]{Sharp convergence rates for averaged nonexpansive maps}

\author[M. Bravo]{Mario Bravo}
\address[M.B.]{Universidad de Santiago de Chile, Departamento de Matem\'atica y Ciencia de la Computaci\'on,
Alameda Libertador Bernardo O'higgins 3363, Santiago, Chile }
\email{ \href{mailto:mario.bravo.g@usach.cl}{\nolinkurl{mario.bravo.g@usach.cl}}}
\thanks{The work of Mario Bravo was partially funded by FONDECYT Grant 11151003 and the N\'ucleo Milenio Informaci\'on y Coordinaci\'on en Redes ICM/FIC RC130003.} 

\author[R. Cominetti]{Roberto Cominetti}
\thanks{Roberto Cominetti gratefully acknowledges the support provided by FONDECYT Grant 1130564.}
\address[R.C.]{Universidad Adolfo Ib\'a\~nez, Facultad de Ingenier\'ia y Ciencias,
Diagonal Las Torres 2640, Santiago, Chile} 
\email{ \href{mailto:roberto.cominetti@uai.cl}{\nolinkurl{roberto.cominetti@uai.cl}}}

\begin{abstract}
We establish sharp estimates for the convergence rate of the  Kranosel'ski\v{\i}--Mann fixed point iteration in general normed spaces, and we use them to show that  the optimal constant  of asymptotic regularity is exactly $1/\sqrt{\pi}$. To this end we 
consider  a nested family
of optimal transport problems that provide a recursive bound for the distance between the iterates. We show that these bounds are tight by building a nonexpansive 
map $T:[0,1]^\NN\to[0,1]^\NN$ that attains them with equality, settling a conjecture by Baillon and Bruck.
The recursive bounds are in turn reinterpreted as absorption probabilities for an underlying Markov 
chain which is used to establish the tightness of the constant $1/\sqrt{\pi}$.
 \end{abstract}


\maketitle

\begin{small}
  \noindent{\bf Keywords:} nonexpansive maps, fixed point iterations, convergence rates, recursive optimal transport.
  \\[2ex]
  \noindent{\bf Mathematics Subject Classification (2010).} Primary: 47H09, 47H10; Secondary: 65K15, 60J10.
\end{small}

\section{Introduction}

Let $C$ be a bounded closed convex set in a normed space $(X, \norm{\,\cdot\,})$, and
 $T :C \to C$ a nonexpansive map so that $\norm{Tx - Ty}\leq \norm{x-y}$ for all $x,y \in C$.  
Given an initial point $x^0 \in C$ and scalars $\alpha_n \in ]0,1[$, the Kranosel'ski\v{\i}--Mann 
iteration \cite{man,kra} generates a sequence $(x^n)_{n\in\NN}$ that approximates a fixed point 
 of $T$  by the recursive averaging process
\begin{equation}
  \label{eq:Mann} \tag{KM}
  x^{n+1}= (1-\alpha_{n+1})\,x^n + \alpha_{n+1}\, Tx^n.
\end{equation}
 Besides providing a method to compute fixed points and 
to prove their existence, this iteration appears in other contexts such 
as the Prox and Douglas-Rachford algorithms for convex optimization
(see \cite[Bauschke and Combettes]{bc}),
as well as in the discretization of nonlinear semigroups $\frac{dx}{dt}+[I-\!T](x)=0$
(see \cite[Baillon and Bruck]{bb}).

The convergence of the KM iterates towards a fixed point has been established  under various conditions.
In 1955,  Krasnosel'ski\v{\i} \cite{kra} considered the case $\alpha_n\equiv 1/2$ proving that
convergence holds if $T(C)$ is pre-compact and the space $X$ is uniformly convex,
obtaining as a by-product the existence of fixed points. Soon after, Schaefer \cite{sch} proved that 
convergence also holds when $\alpha_n\equiv \alpha$ with $\alpha\in ]0,1[$, while Edelstein \cite{ede} showed
that the uniform convexity of $X$ could be relaxed to strict convexity. The weak convergence of the iterates was 
studied in \cite{rei}, while extensions to hyperbolic metric spaces were considered in \cite{rsh}. The asymptotic 
properties of the iterates, without assuming the existence of fixed points, were studied  in \cite{bor}.

A crucial step in these results was to show that $\|x^n-Tx^n\|$ tends to 0.
This property was singled out by Browder and Petryshyn \cite{bro} under the name
of {\em asymptotic regularity},  proving that it holds for $\alpha_n\equiv \alpha$ 
when $X$ is uniformly convex and Fix$(T)\neq\phi$, without
any compactness assumption.  The extension to non-constant sequences
$\alpha_n$ with $\sum\alpha_n(1\!-\!\alpha_n)=\infty$ was achieved 
in 1972 by Groetsch \cite{gro}. Until then, uniform convexity seemed  
essential to obtain rate of convergence estimates. For instance, in Hilbert spaces, 
Browder and Petryshin \cite{bro2} proved that for  $\alpha_n\equiv\alpha$ one has $\sum\|x^{n+1}-x^n\|^2<\infty$ 
with $\|x^{n+1}-x^n\|$ decreasing, which readily implies $\|x^n-Tx^n\|=o(1/\sqrt{n})$,  as observed 
in \cite[Baillon and Bruck]{bb2}.

In 1976, Ishikawa \cite{ish} proved that strict convexity was not required and
asymptotic regularity holds in every Banach space as soon as $\sum\alpha_n=\infty$ 
and $\alpha_n\leq 1-\epsilon$.  The same result was obtained independently by Edelstein and O'Brien
\cite{eob}, who observed that the limit $\|x^n-Tx^n\|\to 0$ is uniform 
with respect to the initial point $x^0$. Later, Goebel and Kirk \cite{goe} observed that it
is also uniform in $T$,
namely, for each $\epsilon>0$ we have $\|x^n-Tx^n\|\leq \epsilon$ for all $n\geq n_0$, with $n_0$ 
depending on $\epsilon$ and $C$ but independent of $x^0$ and $T$. More recently,
Kohlenbach \cite{ko1} showed that $n_0$ depends on $C$ only through its diameter.

The first rate of convergence in general normed spaces was 
obtained  in 1992 by  Baillon and Bruck \cite{bb2}: for the case $\alpha_n\equiv\alpha$ 
they proved that $\|x^n-Tx^n\|=O(1/\log n)$ and conjectured
a faster rate of $O(1/\sqrt n)$. In fact,  based on numerical experiments
they guessed a metric estimate that would imply all the previous
results, namely, they claimed the existence of a universal constant $\kappa$ such that 
\begin{equation}\label{bnd}
\|x^n-Tx^n\|\leq \kappa\frac{\mbox{diam}(C)}{\sqrt{\sum_{i=1}^n\alpha_i(1\!-\!\alpha_i)}}.
\end{equation}
 In \cite[Baillon and Bruck]{bb} this estimate was confirmed to hold with $\kappa=1/\sqrt{\pi}\sim 0.5642$ for 
constant $\alpha_n\equiv\alpha$, while the extension to arbitrary sequences  $\alpha_n$ was recently achieved 
 in  \cite[Cominetti, Soto and Vaisman]{csv} with exactly the same constant. This bound  not only subsumes the known 
 results but it also provides an explicit estimate of the fixed point residual, making it possible to 
compute the number of iterations required to attain any prescribed accuracy.

The constant $1/\sqrt{\pi}$, which appears somewhat mysteriously in this context,
arises from a subtle connection between the KM iteration and a certain random 
walk over the integers (see \cite{csv}). A natural question is whether this is 
an essential feature, or whether a sharper bound can be proved by other means.
In this direction, when $T$ is an affine map and $\alpha_n\equiv 1/2$ a tight bound with $\kappa=1/\sqrt{2\pi}\sim 0.3989$ was proved in \cite{bb2}, while for general $\alpha_n$'s
the optimal constant  is
$\kappa= \max_{x\geq 0}\sqrt{x} e^{-x}I_0(x)\sim 0.4688$, where 
$I_0(\cdot)$ is a modified Bessel function of the first kind (see \cite{bcv,csv}).
For nonlinear maps, no sharp bound seems to be available.
In this paper we close this gap by proving
\begin{theorem}\label{thm:main}
The  constant $\kappa=1/\sqrt{\pi}$ in the bound \eqref{bnd} is tight. Specifically, for each $\kappa<1/\sqrt{\pi}$ 
there exists a nonexpansive map $T$ defined on the unit cube $C=[0,1]^\NN\subseteq \ell^\infty(\NN)$,
an initial point $x^0\in C$, and a constant sequence $\alpha_n\equiv\alpha$, such that the corresponding 
KM iterates $x^n$ satisfy for some $n\in \NN$
\begin{equation}
  \label{eq:thm1}
\norm{x^n - Tx^n} >\kappa \frac{\diam{(C)}}{\sqrt{\sum_{i=1}^n \alpha_i(1\!-\!\alpha_i)}}.
\end{equation}
\end{theorem}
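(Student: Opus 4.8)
The plan is to reduce Theorem~\ref{thm:main} to the \emph{exact} determination of the worst-case residual and then to realize that worst case explicitly on the cube. Recall from the analysis underlying \eqref{bnd} (developed in \cite{csv} and recast here through optimal transport) that, for a fixed scalar sequence, there is a family of numbers $R_n=R_n(\alpha_1,\dots,\alpha_n)$, defined recursively as the values of a nested family of optimal transport problems, such that $\norm{x^n-Tx^n}\le R_n\,\diam(C)$ for \emph{every} bounded closed convex $C$ in a normed space and \emph{every} nonexpansive $T\colon C\to C$, and moreover $R_n\le\frac{1}{\sqrt{\pi}}\big(\sum_{i=1}^n\alpha_i(1-\alpha_i)\big)^{-1/2}$. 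It therefore suffices to establish two points: (i) for each $n$, the bound $R_n$ is attained with equality by some nonexpansive $T$ on $C=[0,1]^\NN\subseteq\ell^\infty(\NN)$ together with a suitable starting point $x^0$; and (ii) with $\alpha_n\equiv\alpha$ constant, $R_n\sqrt{n\,\alpha(1-\alpha)}\to\frac{1}{\sqrt{\pi}}$ as $n\to\infty$. Granting (i) and (ii) and given $\kappa<\frac{1}{\sqrt{\pi}}$, one chooses $\alpha$ and $N$ with $R_N\sqrt{N\alpha(1-\alpha)}>\kappa$; since $\diam([0,1]^\NN)=1$ and $\sum_{i=1}^N\alpha_i(1-\alpha_i)=N\alpha(1-\alpha)$, the map supplied by (i) then satisfies \eqref{eq:thm1} at $n=N$.

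The heart of the matter is (i). I would index the coordinates of $[0,1]^\NN$ by the countably many ``states'' produced along the transport recursion --- concretely, the atoms of the successive transport plans, which organize themselves into a tree as the recursion unfolds --- and take $x^0=\mathbf{1}_A$, the indicator of the distinguished half $A$ of the state space singled out by the recursion, so that the relevant diameter is exactly $1$. The map $T$ is then given by an explicit coordinate rule implementing one step of the ``split and rearrange'' operation of the transport problem: a substitution operator $x\mapsto(x_{\tau(s)})_s$ modified by pointwise $\max$ and $\min$ operations that encode the obstacle (``reflection'') present in the problem. Two things must be verified: that $T$ maps $[0,1]^\NN$ into itself and is nonexpansive for $\norm{\cdot}_\infty$ --- automatic for the substitution part, and a finite case analysis for the lattice operations, since $\max$ and $\min$ are $1$-Lipschitz in each coordinate --- and that, by induction on $n$, the KM iterates satisfy, for every state $s$, that $x^n_s$ is the $n$th transport potential at $s$, which forces $\norm{x^n-Tx^n}_\infty=R_n$. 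The decisive feature is that $\ell^\infty$ fails to be strictly convex, so the triangle-type inequalities behind the recursion can hold with equality; a plain shift or substitution operator, being essentially affine, attains only the smaller affine constant ($\sim0.4688$, and $1/\sqrt{2\pi}$ when $\alpha\equiv1/2$), and it is precisely the reflection built into $T$ that closes the gap up to $1/\sqrt{\pi}$.

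For (ii) I would exploit the probabilistic reading of the recursion: $R_n$ equals the largest atom of the time-$n$ law of an explicit Markov chain driven by the $\alpha_i$ (``move with probability $\alpha_i$, stay with probability $1-\alpha_i$'', with $A$ absorbing), equivalently an absorption probability. For $\alpha_n\equiv\alpha$, combining the bound $R_n\le\frac{1}{\sqrt{\pi}}\big(n\alpha(1-\alpha)\big)^{-1/2}$ with a matching lower estimate --- obtained by a local limit theorem for the underlying walk, keeping track of the reflection --- yields $R_n\sqrt{n\alpha(1-\alpha)}\to\frac{1}{\sqrt{\pi}}$; the constant $1/\sqrt{\pi}$, rather than the bare Gaussian $1/\sqrt{2\pi}$, comes from the absorption/reflection structure. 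Together with (i), this establishes the theorem.

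The principal obstacle is (i): producing one coordinate rule that is \emph{globally} nonexpansive on all of $[0,1]^\NN$ --- not merely along the KM orbit --- while \emph{simultaneously} forcing \emph{every} inequality in the recursive optimal-transport bound to be tight along that orbit. Getting the obstacle/reflection terms in $T$ exactly right, so that nonexpansiveness survives and the inductive identification of $x^n_s$ with the transport potential goes through, is the delicate step; by comparison, the local-limit asymptotics in (ii) are routine once the Markov chain is in hand, requiring only enough uniformity to upgrade $R_n\sqrt{n\alpha(1-\alpha)}\to1/\sqrt{\pi}$ to the strict inequality \eqref{eq:thm1} at a single large $n$.
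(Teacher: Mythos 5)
Your overall architecture (sharp recursive bounds attained on the cube, plus asymptotics of those bounds giving $1/\sqrt{\pi}$) matches the paper's, but both of your key steps have genuine gaps. The more serious one is (ii): for a \emph{fixed} constant $\alpha$ the normalized sharp bound $\kappa_n(\alpha)=\sqrt{n\alpha(1-\alpha)}\,d_{n,n+1}(\alpha)/\alpha$ does \emph{not} tend to $1/\sqrt{\pi}$ as $n\to\infty$; the paper's computations indicate $\kappa_\infty(1/2)\sim\sqrt{2/3\pi}\approx 0.46$, and for every fixed $\alpha<1$ the limit stays strictly below $1/\sqrt{\pi}$. The constant $1/\sqrt{\pi}$ is reached only in a double limit in which $\alpha\to 1$ together with $n\to\infty$ (the paper takes $\alpha=\theta_n=1-\ln n/n$), so choosing $\alpha$ as a function of the target $\kappa$ is essential, not a convenience. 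Relatedly, your proposed ``matching lower estimate'' via a local limit theorem for the walk ``move w.p.\ $\alpha$, stay w.p.\ $1-\alpha$'' addresses the wrong object: that is the chain of \cite{csv}, whose absorption probabilities $c_{mn}$ arise from the \emph{suboptimal} transports \eqref{ztilde} and satisfy $d_{mn}\le c_{mn}$; Gaussian asymptotics for it bound $c_{mn}$, not the sharp quantities $d_{mn}$ from below. To transfer the asymptotics you need a quantitative comparison between the optimal-transport chain and the simple chain, which in the paper is Proposition \ref{lemma:asymp} ($0\le c_{mn}-d_{mn}\le 4m(1-\alpha)^2$, proved via the inside-out optimal transports and the 4-point inequality of Theorem \ref{t1}, valid for $\alpha\ge\tfrac12$), and this estimate is useful only when $n^{3/2}(1-\alpha)^{5/2}\to 0$, i.e.\ precisely along the diagonal $\alpha\to 1$ as in \eqref{cndn}.

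Step (i) is also not established: you assert that an explicit substitution-plus-$\max/\min$ coordinate rule on $[0,1]^\NN$ with $x^0=\mathbf{1}_A$ attains every bound along the orbit, but no such rule is written down, and you yourself identify making the orbit identities hold while preserving global nonexpansiveness as the principal obstacle. The paper resolves this (Theorem \ref{teoint}) by a different device: coordinates are indexed by the pairs $(m,n)$, the iterates are assembled from optimal \emph{dual} potentials $u^{mn}$ of the transport LPs, so that complementary slackness forces $x^m_{mn}-x^n_{mn}=d_{mn}$ while dual feasibility together with the metric and monotonicity properties of $d$ (Theorem \ref{t1}(a),(c)) gives $\norm{x^m-x^n}_\infty\le d_{mn}$; the map $T$ is defined only along the orbit, where it is an isometry, and is then extended to all of $[0,1]^\NN$ by the Aronszajn--Panitchpakdi theorem using the hyperconvexity of $\ell^\infty$. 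Without either this duality-plus-extension mechanism or a fully specified global formula with a proof of the orbit identities, your step (i) remains an unproved claim, and with (ii) false as stated for fixed $\alpha$, the proposal does not yet yield the theorem.
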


Note that by rescaling the norm it can always be assumed that $\diam(C)=1$, which we do from now on. 

Our analysis builds upon  previous results from \cite[Baillon and Bruck]{bb2} and \cite[Aygen-Satik]{aig}, 
for which we first establish a link with optimal transport and thence with a related stochastic 
process connecting with the results in \cite[Cominetti, Soto and Vaisman]{csv}.

Since the proof is somewhat involved, we sketch the main steps.
We begin by noting that $x^n-Tx^n=(x^n-x^{n+1})/\alpha_{n+1}$ so we will search for the best
possible estimates for $\|x^n-x^{n+1}\|$. This is achieved in Section \S\ref{S1} by considering a nested 
sequence of optimal transport
problems that provide a recursive bound $d_{mn}$ for the distance between the iterates $\|x^m-x^n\|\leq d_{mn}$. 
These bounds $d_{mn}$ are universal and do not depend either on the operator $T$ or the 
space $X$.  Moreover, it turns out that the $d_{mn}$'s induce a metric on the integers that satisfies a special 4-point 
inequality, which implies that the optimal transports have a simple structure and can be computed by the
{\em inside-out algorithm}, as conjectured in \cite{bb2}. 
We then show that the bounds $d_{mn}$ are the best possible by  constructing a specific nonexpansive 
map $T$ that attains them all as equalities $\|x^m-x^n\|=d_{mn}$, which settles a conjecture formulated in 
\cite{bb2}. 
In Section \S\ref{S2} we re-interpret the recursive bounds $d_{mn}$ as absorption probabilities
for an underlying Markov chain with transition matrix defined from the optimal transports. 
When $\alpha_n\equiv\alpha\geq\frac{1}{2}$ the optimal transports take a particularly simple form
and the induced Markov chain turns out to be asymptotically 
equivalent for $\alpha\sim 1$ to the Markov chain used in 
\cite{csv}. These results are combined in Section \S\ref{S3} in order to prove that the 
constant $\kappa=1/\sqrt{\pi}$ is optimal, establishing Theorem \ref{thm:main}. 
Some final comments are presented in Section \S\ref{S4}, while the more technical 
proofs are left to the Appendix.

\section{Sharp recursive bounds based on optimal transport}\label{S1}
Following \cite[Baillon and Bruck]{bb2}, we look for the sharpest possible estimates $d_{mn}$
for the distance between the KM iterates  $\|x^m\!-\!x^n\|\leq d_{mn}$. To this end, we note that 
$x^n$ is a weighted average between the previous iterate $x^{n-1}$ and its image $Tx^{n-1}$, so 
it follows inductively that $x^n$ is an average of $x^0,Tx^0, Tx^1,\ldots,Tx^{n-1}$. Explicitly, set  $\alpha_0=1$ and
consider the probability distribution $\pi_i^n=\alpha_i \prod_{k=i+1}^n(1\!-\!\alpha_k)$ for $i=0,\ldots,n$.
Then, adopting the convention $Tx^{-1}=x^0$ we have $x^n=\sum_{i=0}^n \pi_i^n \; Tx^{i-1}$.
\begin{figure}[h]
\centering
\begin{tikzpicture}[scale=0.65]
    \begin{axis}[ybar stacked,
        ymin=0,
        xmin=-1.5,
        xmax= 26.5,
        width  = 12cm,
        height = 8cm,
        bar width=9pt,
        xtick = data,
        table/header=false,
        table/row sep=\\,
        xticklabels from table={ 0\\ \  \\ \  \\ \ \\ \ \\ 5\\ \ \\ \ \\ \ \\ \ \\ 10\\ \ \\ \ \\ \ \\ \ \\ 15\\ \ \\ \ \\ \ \\ \ \\20\\ \ \\ \ \\ \ \\ \ \\25\\}{[index]0},
        yticklabels from table={ 0.0\\ 0.0 \\ 0.05\\ 0.1\\ 0.15\\0.2 \\ 0.25\\ 0.3\\}{[index]0},
        enlarge y limits={value=0.2,upper},
    ]
 \node[anchor=south] at (axis cs:20,0.23) {$\pi^m$};
 \node[anchor=south] at (axis cs:25,0.235) {$\pi^n$};

    \addplot[fill=gray] table[x expr=\coordindex,y index=0]{ 
    0.0007\\
    0.0002\\
    0.0003\\
    0.0005\\
    0.0006\\
    0.0007\\
    0.0012\\
    0.0015\\
    0.0019\\
    0.0030\\
    0.0036\\
    0.0049\\
    0.0070\\
    0.0081\\
    0.0099\\
    0.0154\\
    0.0240\\
    0.0266\\
    0.0296\\
    0.0453\\
    0.0553\\
    0.0760\\
    0.1251\\
    0.1397\\
    0.1826\\
    0.2360\\
};
    \addplot[fill=black] table[x expr=\coordindex,y index=0]{
  0.0022\\
   0.0008\\
   0.0008\\
   0.0016\\
   0.0018\\
   0.0023\\
   0.0040\\
   0.0049\\
   0.0059\\
   0.0096\\
   0.0115\\
   0.0155\\
   0.0223\\
   0.0258\\
   0.0314\\
   0.0487\\
   0.0759\\
   0.0838\\
   0.0933\\
   0.1429\\
   0.1747\\
    0\\
    0\\
    0\\
    0\\
    0\\
};

    \end{axis}
\end{tikzpicture}
\vspace{-2ex}
\caption{\label{pis}A plot of $\pi^m$ and $\pi^n$ ($m=20, n=25$).}
\end{figure}
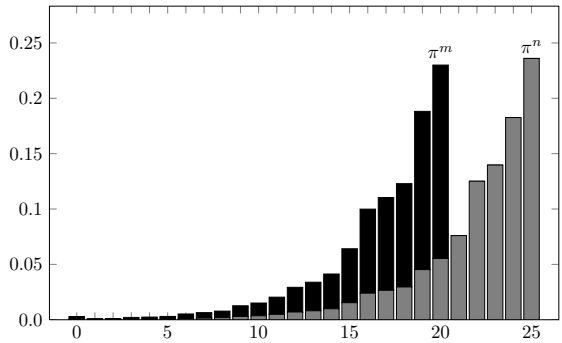

Since a similar formula holds for $x^m$ with $\pi^n$ replaced by $\pi^m$, we may 
try to match both expressions by solving an optimal transport problem between 
$\pi^m$ and $\pi^n$ (see Figure \ref{pis}). Namely, let $\mathcal N=  \{-1,0,1,\dots\}$  
and set $d_{-1,-1}=0$ and $d_{-1,k}=d_{k,-1}=1$ for all $k \in \NN$, and 
then define recursively $d_{mn}$ for $m,n\in\mathcal N$ by  solving a sequence of linear optimal transport problems 
on the complete bipartite graph of Figure \ref{feasible_plan}. Specifically, let us consider 
supplies $\pi_i^m$  at the source nodes $i=0,\ldots,m$ on the left, demands $\pi_j^n$ at the
destination nodes $j=0,\ldots,n$ on the right, unit costs $d_{i-1,j-1}$ on each link $(i,j)$, 
and define
\begin{equation} \label{pmn} \tag{$\mathcal P_{mn}$}
d_{mn}= \mathop{\rm min}_{z \in F^{mn}}C^{mn}(z)=\sum_{i=0}^{m}\sum_{j=0}^{n} z_{ij}\,d_{i-1,j-1}
\end{equation}
where $F^{mn}$ denotes the polytope of feasible transport plans $z$ between $\pi^m$ and $\pi^n$ defined by 
\begin{equation}
  \label{eq:feasible}
  \begin{array}{rcll}
  z_{ij}&\geq& 0&\mbox{ for } i=0,\ldots,m\mbox{ and } j=0,\ldots,n\\[0.5ex]
    \mbox{$\sum_{j=0}^{n}$} \;z_{ij}&=&\pi_i^m&\mbox{ for }\;i=0,\ldots,m\\[0.5ex]
  \mbox{$\sum_{i=0}^{m}$} \;z_{ij}&=&\pi_j^n&\mbox{ for } \; j=0,\ldots,n.
  \end{array}
\end{equation}

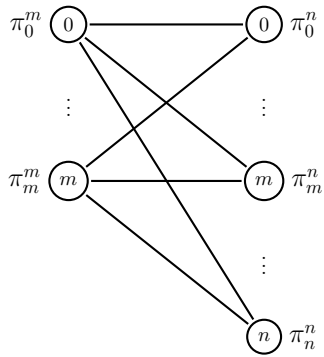
\begin{figure}[h]
\centering
\begin{tikzpicture}[thick, -,shorten >= 1pt,shorten <= 1pt,scale=0.65,every node/.style={scale=0.7}]
\begin{scope}[start chain=going below,node distance=5mm]
 \node[on chain,draw,circle] (i0) [label=left: {\Large $\pi_0^m$}] {$0$};
 \node[on chain] (i1) {$\vdots$};
 \node[on chain,draw,circle] (im) [label=left: {\Large $\pi_m^m$}] {$m$};
\end{scope}
\begin{scope}[xshift=4cm,yshift=0cm,start chain=going below,node distance=5mm]
 \node[on chain,draw,circle] (j0) [label=right: {\Large $\pi_0^n$}] {$0$};
 \node[on chain] (j1) {$\vdots$};
 \node[on chain,draw,circle] (jm) [label=right: {\Large $\pi_m^n$}] {$m$};
 \node[on chain] (j2) {$\vdots$};
 \node[on chain,draw,circle] (jn) [label=right: {\Large $\pi_n^n$}] {$n$};
\end{scope}
\draw (i0) -- (j0);
\draw (i0) -- (jm);
\draw (i0) -- (jn);
\draw (im) -- (j0);
\draw (im) -- (jm);
\draw (im) -- (jn);
\end{tikzpicture}
\caption{\label{feasible_plan}The optimal transport problem \eqref{pmn}.}
\end{figure}

The problems \eqref{pmn} are equivalent to the recursive linear programs considered in \cite[Baillon and Bruck]{bb2}, 
though the formulation in terms of optimal transport is new. The connection between the $d_{mn}$'s and the KM iterates
is as follows.

\begin{theorem}\label{P1} Let $T:C\to C$ be any nonexpansive map  on a convex set $C$ with
$\diam(C)=1$.
Then the KM iterates satisfy $\|x^m-x^n\|\leq d_{mn}$ for all $m,n\in\NN$.
\end{theorem}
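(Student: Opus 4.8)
The plan is to run a single induction on $m+n$, exploiting the fact that each iterate is a convex combination of the shifted images $y^i:=Tx^{i-1}$ (so, with the stated convention, $y^0=Tx^{-1}=x^0$): explicitly $x^n=\sum_{i=0}^n\pi_i^n\,y^i$, and similarly for $x^m$. Concretely I would prove simultaneously the two assertions: (i) $\|x^m-x^n\|\le d_{mn}$ for all integers $m,n\ge 0$, and (ii) $\|y^i-y^j\|\le d_{i-1,j-1}$ for all integers $i,j\ge 0$. The base case $m=n=0$ of (i) is immediate, since $x^0=x^0$ while the feasible set $F^{00}$ reduces to the single plan $z_{00}=\pi_0^0=1$, forcing $d_{00}=d_{-1,-1}=0$.

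First I would deduce (ii) from (i) at suitable indices. If $i,j\ge 1$, nonexpansiveness gives $\|y^i-y^j\|=\|Tx^{i-1}-Tx^{j-1}\|\le\|x^{i-1}-x^{j-1}\|$, and the latter is at most $d_{i-1,j-1}$ by (i). If $i=0$ or $j=0$, one of the two points is $x^0$, so $\|y^i-y^j\|\le\diam(C)=1$, which agrees with the boundary value $d_{i-1,j-1}$ fixed in the construction of the $d$'s; that value equals $0$ only when $i=j=0$, in which case the left-hand side vanishes too.

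Next I would carry out the inductive step for (i): fix $(m,n)$ and assume (i) — hence also (ii) — for all pairs of strictly smaller index sum. Pick an arbitrary feasible plan $z\in F^{mn}$. Using the marginal constraints $\sum_{j=0}^n z_{ij}=\pi_i^m$ and $\sum_{i=0}^m z_{ij}=\pi_j^n$ from \eqref{eq:feasible} together with the averaging formulas above, one gets
\[
x^m-x^n=\sum_{i,j}z_{ij}\,y^i-\sum_{i,j}z_{ij}\,y^j=\sum_{i,j}z_{ij}\,(y^i-y^j),
\]
where $\sum_{i,j}$ ranges over $0\le i\le m$, $0\le j\le n$. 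Since the $z_{ij}$ are nonnegative and sum to $1$, the triangle inequality followed by (ii) yields
\[
\|x^m-x^n\|\le\sum_{i,j}z_{ij}\,\|y^i-y^j\|\le\sum_{i,j}z_{ij}\,d_{i-1,j-1}=C^{mn}(z).
\]
The instances of (ii) invoked here are those at indices $(i-1,j-1)$ with $i\le m$, $j\le n$, which appeal to (i) only at pairs of index sum at most $m+n-2$ (or at boundary pairs handled directly), so they are available. As $z\in F^{mn}$ was arbitrary, minimizing $C^{mn}(\cdot)$ over $F^{mn}$ gives $\|x^m-x^n\|\le d_{mn}$, which closes the induction and proves the theorem.

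I do not anticipate a real obstacle: the argument is essentially the triangle inequality applied along an optimal transport plan. The one point to get right is the well-foundedness of the recursion, namely that the cost coefficients of \eqref{pmn} — the $d_{i-1,j-1}$ with $i\le m$, $j\le n$ — only reach back to $d_{m-1,n-1}$ in the worst case, the remaining (boundary) coefficients being pinned down directly by $\diam(C)=1$. It is also worth recording that the estimate is automatically universal: beyond nonexpansiveness and the normalization $\diam(C)=1$, neither $T$ nor the ambient space enters the bound.
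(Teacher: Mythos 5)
Your proof is correct and follows essentially the same route as the paper's: decompose $x^m-x^n$ along an arbitrary feasible transport plan, apply the triangle inequality together with nonexpansiveness and the boundary values $d_{-1,k}=1$, and then minimize over $F^{mn}$. The only difference is that you organize the induction explicitly on $m+n$ with the auxiliary estimate $\|Tx^{i-1}-Tx^{j-1}\|\leq d_{i-1,j-1}$, which merely makes precise the well-foundedness that the paper's induction states more informally.
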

\proof 
For each feasible transport plan 
$z\in F^{mn}$ we have
$$
\begin{aligned}
  x^m-x^n &= \sum_{i=0}^{m}\pi_i^m\, Tx^{i-1} - \sum_{j=0}^{n} \pi_j^n\, Tx^{j-1} \\
    &= \sum_{i=0}^{m} \sum_{j=0}^{n} z_{ij}\,Tx^{i-1} -  \sum_{j=0}^{n}\sum_{i=0}^{m}z_{ij}\,Tx^{j-1}\\
            &= \sum_{i=0}^{m} \sum_{j=0}^{n} z_{ij} \left (Tx^{i-1} -Tx^{j-1} \right ).
            \end{aligned}
$$
Now, suppose inductively that the bound $\norm{x^i -x^j} \leq d_{ij}$ 
holds for all $i<m$ and $j<n$ (this is trivially the case for $m=n=1$). Using the triangle inequality, 
the nonexpansivity of $T$, and the fact that $d_{-1,k}=1=\mbox{diam}(C)$ to bound the terms $i=0$ and $j=0$, we get
\begin{equation*}
 \norm{ x^m-x^n} \leq \sum_{i=0}^{m} \sum_{j=0}^{n} z_{ij} \norm{Tx^{i-1} -Tx^{j-1}} \leq \sum_{i=0}^{m} \sum_{j=0}^{n} z_{ij}d_{i-1,j-1}.
\end{equation*}
Minimizing the right hand side over $z\in F^{mn}$ we get  $\norm{ x^m-x^n}\leq d_{mn}$, completing the induction step.
\endproof

We stress that the $d_{mn}$'s are fully determined by the sequence $\alpha_n$ and do not depend on the map $T$,
yet the bound $\|x^m-x^n\|\leq d_{mn}$ is valid for all KM iterates for every nonexpansive map $T:C\to C$ in any normed 
space $X$. Since the proof relies solely on the triangle inequality, a ligitimate question is whether the bounds $d_{mn}$ could be improved 
by using more sophisticated arguments. We will show that these bounds are in fact the best possible, as conjectured
in  \cite[Baillon and Bruck]{bb2}. 

Before proceeding with the proof we make some observations regarding the recursive bounds $d_{mn}$.
Firstly, note that problem \eqref{pmn} is symmetric, so that
$d_{mn}=d_{nm}$ and therefore it suffices to compute these quantities for $m\leq n$. 
On the other hand, any feasible transport $z$ has non-negative entries that add up to one, so that
$C^{mn}(z)$ is an average of previous values $d_{i-1,j-1}$ and then it follows inductively that 
$d_{mn}\in [0,1]$ for all $m,n\in\mathcal N$. It is also easy to prove inductively that $d_{nn}=0$ for all $n\in\mathcal N$. 

Informally, an optimal transport should satisfy the demand $\pi_j^n$ at each node $j=0,\ldots,n$ from the closest
supply nodes $i=0,\ldots,m$. In particular, for $i\leq m\leq n$ we have $\pi_i^n=\pi_i^m\prod_{k=m+1}^n(1\!-\!\alpha_k)\leq\pi_i^m$
and since $d_{i-1,i-1}=0$, the demand $\pi_i^n$ at destination $i$ should be fulfilled directly from the supply $\pi_i^m$
available at the {\em twin} source node $i$. The proof of these intuitive facts is not so simple, though.
The next Theorem, which is due to \cite[Aygen-Satik]{aig}, summarizes some properties that will be relevant 
in the analysis of the KM iteration. Since the proofs in \cite{aig} are rather 
long and technical, in the Appendix we present much simpler proofs which strongly exploit the interpretation
of $d_{mn}$ in terms of optimal transport.

\begin{theorem}[Aygen-Satik \cite{aig}]\label{t1}\ \\[0.5ex]
{\em (a)} The function $d:\mathcal N \times \mathcal N \to [0,1]$ where $d(m,n)=d_{mn}$ defines a distance over the set $\mathcal N$.\\[0.5ex]
{\em (b)} For $m\leq n$ there exists an optimal transport $z$ for $d_{mn}$ with $z_{ii}=\pi_i^n$  for all $i=0,\ldots,m$ 
and {\em a fortiori} $z_{ij}=0$ for all $j\in\{0,\ldots,m\}, j\neq i$. Such an optimal solution will be called {\em simple}.\\[0.5ex]
{\em (c)} For fixed $m$ we have that $n\mapsto d_{mn}$ is decreasing for $n\leq m$ and  increasing for $n\geq m$.\\[0.5ex]
{\em (d)} If $ \alpha_k\geq 1/2$ for all $k \in \mathbb N$, then for all integers $0\leq i\leq k\leq j\leq l$ we have $d_{il}+d_{kj}\leq d_{ij}+d_{kl}$.
We refer to this as the {\em 4-point inequality}.
\end{theorem}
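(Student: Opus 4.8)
The plan is to exploit the transport formulation \eqref{pmn} and to prove the four assertions in turn, each by an induction on the indices involved. It is convenient to reindex: let $\widetilde\pi^{\,n}$ be the push‑forward of $\pi^n$ under the shift $i\mapsto i-1$, so that \eqref{pmn} is literally the optimal transport problem between $\widetilde\pi^{\,m}$ and $\widetilde\pi^{\,n}$ with ground cost $d$, and note the one‑step identity
\[
\widetilde\pi^{\,n+1}=(1-\alpha_{n+1})\,\widetilde\pi^{\,n}+\alpha_{n+1}\,\delta_n ,
\]
which records that after the shift the mass $\pi^{n+1}_{n+1}=\alpha_{n+1}$ lands on node $n$ while the remaining nodes carry $(1-\alpha_{n+1})\pi^n$. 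At each stage the inductive hypothesis will guarantee that $d$ restricted to the finite index set in play is already a metric, so that \eqref{pmn} is a genuine Wasserstein distance and the transport manipulations below are legitimate.

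\emph{Part (a).} Symmetry is built into \eqref{pmn}, and $d_{mn}\in[0,1]$, $d_{nn}=0$ have already been observed. For definiteness, if $m<n$ then destination node $n$ carries demand $\pi^n_n=\alpha_n>0$, which must be served from source nodes $i\le m<n$, and every such link has cost $d_{i-1,n-1}>0$ (by induction, since $i-1\ne n-1$), so $d_{mn}\ge\alpha_n\min_{i\le m}d_{i-1,n-1}>0$. The triangle inequality $d_{mk}\le d_{mn}+d_{nk}$ is the substantive point, and I would prove it by gluing optimal plans: take $z$ optimal for $(\mathcal P_{mn})$ and $z'$ optimal for $(\mathcal P_{nk})$, and set $w_{ab}=\sum_{c:\,\pi^n_c>0}z_{ac}z'_{cb}/\pi^n_c$. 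A direct check of the marginals shows $w\in F^{mk}$, and applying the triangle inequality on the smaller indices termwise --- $d_{a-1,b-1}\le d_{a-1,c-1}+d_{c-1,b-1}$, with the cases involving the index $-1$ settled directly since $d_{-1,\cdot}\in\{0,1\}$ --- gives
\[
C^{mk}(w)\le\sum_{a,c}z_{ac}\,d_{a-1,c-1}+\sum_{c,b}z'_{cb}\,d_{c-1,b-1}=d_{mn}+d_{nk}.
\]
(The induction here is on the sum $m+n+k$, the shifted triple having strictly smaller sum.)

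\emph{Part (b).} Fix $m\le n$ and, among all optimal plans for $(\mathcal P_{mn})$, choose one, $z$, maximizing the diagonal mass $\Phi(z)=\sum_{i=0}^m z_{ii}$; the maximum is attained since $\Phi$ is linear on the compact optimal face. Suppose $z_{pp}<\pi^n_p$ for some $p\le m$. Then $\pi^n_p-z_{pp}>0$ of the demand at destination $p$ is met from other sources and $\pi^m_p-z_{pp}\ge\pi^n_p-z_{pp}>0$ of the supply at source $p$ goes to other destinations, so there are a source $q\ne p$ with $z_{qp}>0$ and a destination $r\ne p$ with $z_{pr}>0$. Moving a small amount $\delta>0$ off the links $(q,p)$ and $(p,r)$ onto $(p,p)$ and $(q,r)$ preserves all marginals, changes the cost by $\delta\,(d_{q-1,r-1}-d_{q-1,p-1}-d_{p-1,r-1})\le 0$ (triangle inequality, using $d_{p-1,p-1}=0$), and strictly raises $\Phi$, contradicting its maximality. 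Hence $z_{ii}=\pi^n_i$ for all $i\le m$; since this saturates the demand at each destination $i\le m$, the remaining entries of columns $0,\dots,m$ vanish, and the residual supply $\pi^m_i-\pi^n_i$ at source $i$ is routed to destinations $>m$.

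\emph{Parts (c) and (d).} Both continue the induction on the largest index, now using the simple form of (b) to peel that index off and the mixture identity to link consecutive problems. For (c) it suffices to show $d_{mn}\le d_{m,n+1}$ for $m\le n$: starting from a simple optimal plan for $(\mathcal P_{m,n+1})$ one redistributes the mass it sends to the new destination $n+1$ back onto the destinations $\le n$ and the diagonal, producing a feasible plan for $(\mathcal P_{mn})$; estimating its cost reduces --- via Kantorovich duality and the inductive hypothesis that $d(a,\cdot)$ is already non‑decreasing on indices $\le n$ --- to an auxiliary inequality comparing $d_{mn}$ with the weighted sums $\sum_j\pi^m_j\,d_{n,j-1}$, which is carried along in the induction. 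The $4$‑point inequality (d) is the genuinely delicate part, and the only place where the standing hypothesis $\alpha_r\ge\tfrac12$ is used. Inducting on $l$ and peeling it off $d_{il}$ and $d_{kl}$ through simple optimal plans, the comparison of the resulting cross terms goes in the required direction precisely because $1-\alpha_r\le\alpha_r$; equivalently, this is what forces the optimal transports to be laminar (non‑crossing), which is exactly the structure the inside‑out algorithm exploits. Pinning down this nesting of mass in the simple plans is where I expect the real difficulty to lie.
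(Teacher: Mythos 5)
Your treatments of (a) and (b) are essentially correct and close in spirit to the paper's: the gluing construction $w_{ab}=\sum_c z_{ac}z'_{cb}/\pi^n_c$ for the triangle inequality is exactly the argument used there, and your packaging of (b) --- maximize the diagonal mass $\Phi$ over the (compact) optimal face and derive a contradiction from a cost-nonincreasing, $\Phi$-increasing swap --- is a legitimate and arguably cleaner substitute for the paper's explicit finite sequence of swaps with a lexicographic termination argument. The difficulties begin at (c) and become fatal at (d).

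For (c), your claim that ``it suffices to show $d_{mn}\le d_{m,n+1}$ for $m\le n$'' is not correct: the decreasing assertion for $n\le m$ concerns moving the \emph{smaller} index away from the larger one, and since $d$ is not translation invariant it cannot be recovered from the increasing half by symmetry; the paper proves it by a separate (if analogous) flow rerouting. Moreover, even your increasing half is only sketched: it leans on an unformulated ``auxiliary inequality comparing $d_{mn}$ with $\sum_j\pi^m_j\,d_{n,j-1}$'' that you propose to carry along in the induction but never state precisely or verify, and no duality is actually needed --- one simply reroutes, in a simple optimal plan for $(\mathcal P_{m,n+1})$, the mass sent to node $n+1$ back to destinations $\le n$, the cost increments being $\varepsilon(d_{i-1,i-1}-d_{i-1,k-1})\le 0$ and $\varepsilon(d_{i-1,j-1}-d_{i-1,n})\le 0$ by the inductive monotonicity. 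For (d) you offer a plan, not a proof, and you explicitly defer the crux (``pinning down this nesting of mass''), which is exactly where all the work lies. What is missing are the three concrete steps the argument needs: (i) a reduction of the general 4-point inequality to the consecutive case $d_{m,n+1}+d_{m+1,n}\le d_{mn}+d_{m+1,n+1}$, obtained by telescoping once one knows the increments $d_{i,n+1}-d_{in}$ are monotone in $i$; (ii) an explicit identification of the simple optimal plan under $\alpha_k\ge \tfrac{1}{2}$ and the induction hypothesis --- node $m$ alone serves all intermediate demands because $\pi^m_m=\alpha_m\ge\tfrac{1}{2}\ge 1-\alpha_n\ge\sum_{j=m}^{n-1}\pi^n_j$, which is the precise place where the hypothesis $\alpha_k\ge\tfrac{1}{2}$ enters, not a vague statement that ``cross terms go in the required direction''; and (iii) the substitution of these formulas into $\Delta=d_{mn}+d_{m+1,n+1}-d_{m,n+1}-d_{m+1,n}$ and its regrouping as a sum of three terms, each nonnegative by the induction hypothesis. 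None of these appears in your proposal, so part (d) --- the very ingredient the rest of the paper needs for the sharp constant --- remains unproved.
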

\begin{proof}  See Appendix \ref{appendixA}.
\end{proof}  

\vspace{-0.51cm}
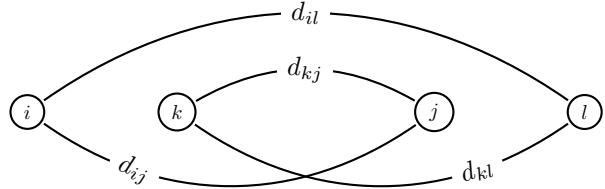
\begin{figure}[h]
\centering
\begin{tikzpicture}[thick,-,shorten >= 1pt,shorten <= 1pt,scale=0.7,every node/.style={scale=0.7}]
\begin{scope}[start chain=going right,node distance=15mm]
 \node[on chain,draw,circle] (m)  {$i$};
 \node[on chain,draw, circle] (k)  {$k$};
 \node[xshift=2cm,on chain,draw, circle] (j)  {$j$};
\node[on chain,draw,circle] (n) {$l$};
\end{scope}
\path[every node/.style={font=\sffamily\small}] (m) edge [bend left = 35]   node [fill=white] {$d_{il}$}  (n) ;
\path[every node/.style={font=\sffamily\small}] (k) edge [bend left]   node  [fill=white]  {$d_{kj}$} (j) ;
\path[every node/.style={font=\sffamily\small}] (k) edge [bend right = 35]  node  [near end,sloped, fill=white] {$d_{kl}$} (n) ;
\path[every node/.style={font=\sffamily\small}] (m) edge [bend right = 35]   node  [near start,sloped,fill=white] {$d_{ij}$} (j) ;

\end{tikzpicture}
\caption{The 4-point inequality.\label{4-point}}
\end{figure}

\begin{remark}\label{inside-out}
{\em Although our proof of the 4-point inequality (d) works only for $\alpha_k\geq 1/2$ (which suffices for our purposes), 
it is worth noting that  in \cite{aig} it is proved for all $\alpha_k\in\; ]0,1[$. A relevant consequence of this property 
is the existence of an optimal transport in which the flows do not cross. 
Namely, if $z$ is a transport plan  such that $z_{ij}>0$ and $z_{kl}>0$ with $i<k<j< l$, then
we can reduce the cost  by removing $\varepsilon=\min\{z_{ij},z_{kl}\}$ from these lower arcs  (see Figure \ref{4-point}) 
while augmenting the uncrossed flows $z_{il}$ and $z_{kj}$ by the same amount. 
This {no flow-crossing} property justifies  in turn the {\em inside-out algorithm} 
which was conjectured in \cite{bb2} to solve $(\mathcal P_{mn})$: set $z_{ii}=\pi_i^n$
for all $i=0,\ldots,m$ and then consider
sequentially the demands $\pi_j^n$  for $j=m+1,\ldots,n$ and fulfill them  from the closest supply 
nodes starting with  $i=m$ and moving towards smaller indices $i=m-1,\ldots,0$ 
when the residual supply $\pi_i^m-\pi_i^n$ of node $i$ is exhausted.
}
\end{remark}

Let us proceed to show that the bounds $d_{mn}$ are sharp. The proof exploits
properties (a), (b), (c) to construct a nonexpansive map $T$ for which the KM iterates satisfy 
$\|x^m-x^n\|=d_{mn}$ for all $m,n$. Property (d) will only be used later in order to show that the constant 
$\kappa=1/\sqrt{\pi}$ in \eqref{bnd} is sharp.


\begin{theorem}\label{teoint}
Let a sequence $\alpha_n\in\; ]0,1[$ be given and consider the corresponding bounds $d_{mn}$.
Then there exists a nonexpansive map $T:C\to C$ defined on the unit cube $C=[0,1]^\NN\subseteq\ell^\infty(\NN)$
and a corresponding KM sequence for which the equality $\norm{ x^m-x^n}_\infty= d_{mn}$ holds for all $m,n\in\NN$.
\end{theorem}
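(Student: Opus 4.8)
The plan is to build the coordinates of the map $T$ one at a time, so that the $k$-th coordinate of the iterates ``witnesses'' a particular distance relation. Concretely, since $C=[0,1]^\NN$ and the sup norm only sees the worst coordinate, it suffices to exhibit, for each target pair or each target index, a single real-valued function that is nonexpansive as a map of the relevant data and that realizes the bound $d_{mn}$ exactly in that coordinate. Because the iterates $x^n$ are determined by the values $Tx^{-1},Tx^0,\dots,Tx^{n-1}$ via the fixed convex weights $\pi^n$, the natural strategy is a \emph{simultaneous recursive construction}: define $Tx^{n}$ coordinatewise in terms of $x^0,\dots,x^n$ (equivalently in terms of $Tx^{-1},\dots,Tx^{n-1}$) in such a way that at every stage the already-constructed finite data is consistent with $\|x^i-x^j\|_\infty=d_{ij}$ for all $i,j$ up to the current index, and then verify that the resulting partially defined map extends to a genuine nonexpansive $T$ on all of $C$.

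The key steps, in order, are as follows. First, fix one distinguished coordinate, say coordinate $0$, and arrange that $x^n_0$ traces out a point on a line (or a simple $1$-dimensional configuration) whose mutual distances are exactly the $d_{mn}$; this is possible precisely because property (a) of Theorem~\ref{t1} guarantees $d$ is a genuine metric on $\mathcal N$, and property (c) gives the monotonicity $n\mapsto d_{mn}$ that lets us realize it isometrically by a \emph{monotone} embedding of the half-lines $\{n\le m\}$ and $\{n\ge m\}$ into $[0,1]$ — so we may take $x^n_0$ to be a prescribed nondecreasing-then-nonincreasing sequence in $[0,1]$. Second, one must define $T$ on the orbit so that the KM recursion reproduces exactly this prescribed sequence; here the simple optimal transport from property (b) is the engine: writing $x^n=\sum_i\pi^n_i Tx^{i-1}$ and using that the optimal $z$ for $d_{mn}$ has $z_{ii}=\pi^n_i$ on the ``twin'' nodes, one reads off what $Tx^{i-1}$ must be at each coordinate so that the differences $x^m-x^n$ align, in that coordinate, with the transport decomposition $x^m-x^n=\sum_{i,j}z_{ij}(Tx^{i-1}-Tx^{j-1})$ and all triangle inequalities in the proof of Theorem~\ref{P1} become equalities. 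Third — and this is where more than one coordinate is genuinely needed — a single coordinate cannot make the bound tight \emph{for all pairs $(m,n)$ at once}, so one introduces a new coordinate for each new iterate (or each new pair), with $T$ acting on the orbit in coordinate $k$ by a formula that is $1$-Lipschitz in the sup norm of its argument; the earlier coordinates are left ``frozen'' once their witnessing job is done. Finally, one checks that the map defined on the countable orbit $\{x^n, Tx^{n}: n\ge -1\}$ is nonexpansive for $\|\cdot\|_\infty$, and invokes a Kirszbraun/McShane-type extension — for the sup norm this is elementary, extending coordinatewise by $f_k(x)=\inf_{y\in D}\big(f_k(y)+\|x-y\|_\infty\big)$ — to obtain $T$ on all of $[0,1]^\NN$ with range in $[0,1]^\NN$ (truncating to $[0,1]$ preserves nonexpansivity).

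I expect the main obstacle to be the bookkeeping in the recursive coordinatewise definition: one must choose, \emph{consistently and in the correct range $[0,1]$}, the values $(Tx^{i-1})_k$ for all $i$ and the relevant coordinates $k$ so that (i) every inequality used in the proof of Theorem~\ref{P1} is saturated, which pins down differences of coordinates but leaves sign/placement choices that must be made coherently across all pairs, and (ii) the orbit map is globally nonexpansive, not merely pairwise-consistent on the pairs one is trying to saturate — a choice that tightens $\|x^m-x^n\|_\infty$ may inadvertently enlarge some other $\|x^i-x^j\|_\infty$ beyond $d_{ij}$. The metric property (a) and the $4$-point structure behind property (b) are exactly what rule this out: because $d$ is a metric realized by the \emph{simple} transports, the local equality constraints propagate without conflict, and the monotonicity (c) lets each witnessing coordinate be taken monotone so that freezing it later does no harm. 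Making this propagation precise — essentially an induction on $n$ showing that a nonexpansive partial map on $\{x^i,Tx^{i-1}:i\le n\}$ realizing $\|x^i-x^j\|_\infty=d_{ij}$ can always be extended one more step — is the technical heart of the argument.
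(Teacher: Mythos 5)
Your overall architecture (one coordinate per pair, freeze coordinates once they have done their witnessing job, extend the orbit map coordinatewise by a McShane-type formula — which for the sup norm is indeed equivalent to the paper's appeal to hyperconvexity and Aronszajn--Panitchpakdi) matches the paper's construction in outline, but the proposal has a genuine gap exactly where you flag "the technical heart": you never say how to choose the coordinate values so that (i) the $(m,n)$-coordinate of $x^m-x^n$ equals $d_{mn}$ exactly, while (ii) \emph{every} pair of orbit points $y^{i-1}=Tx^{i-1}$, $y^{j-1}=Tx^{j-1}$ still satisfies $|y^{i-1}_k-y^{j-1}_k|\le d_{i-1,j-1}$ in \emph{every} coordinate $k$ — the consistency problem you acknowledge but only hope is resolved by "the metric property (a) and the 4-point structure behind (b)". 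That hope is not a proof, and the 4-point inequality (d) is in fact not used for this theorem at all (the paper uses it only later, for the constant $1/\sqrt{\pi}$). The missing idea is linear programming duality: the paper takes, for each pair $(m,n)$, an optimal dual solution $u^{mn}$ of the transport problem (normalized so $u^{mn}_0=0$, which forces $u^{mn}_j\in[0,1]$), sets the coordinate values $y^k_{mn}=u^{mn}_{k+1}$ (extended constantly beyond $n$, using monotonicity (c) to preserve the constraints), and then dual feasibility $|u^{mn}_j-u^{mn}_i|\le d_{i-1,j-1}$ delivers (ii) for free, while complementary slackness with the optimal primal transport $z^{mn}$ delivers (i): $x^m_{mn}-x^n_{mn}=\sum_{i,j}z^{mn}_{ij}(u^{mn}_i-u^{mn}_j)=\sum_{i,j}z^{mn}_{ij}d_{i-1,j-1}=d_{mn}$. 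Without this mechanism, your recursive bookkeeping has no reason to close: a choice that saturates one pair can violate a Lipschitz constraint needed for another, and nothing in (a)--(c) alone rules that out.

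A second, more local error: your first step claims a single distinguished coordinate can trace "a simple $1$-dimensional configuration whose mutual distances are exactly the $d_{mn}$". A general metric on $\mathcal N$ does not embed isometrically in $\RR$, and $d$ is not a line metric; at best a single coordinate can witness the distances from one fixed index (a Kuratowski-type coordinate $n\mapsto d_{mn}$), which is far weaker than what you need, because the points $x^n$ are not free to be placed — they are generated by the KM recursion from the values $y^{k}=Tx^{k}$, and it is the $y$'s whose mutual distances must also come out equal to $d_{ij}$ (this is what makes $T$ an isometry on the orbit, so that the extension is nonexpansive and the $x^n$ are genuine KM iterates). The paper gets this extra equality $\|y^{i-1}-y^{j-1}\|_\infty=d_{i-1,j-1}$ as a by-product of the same duality argument, by observing that $z^{j-1,j}_{ij}>0$ forces the inequality chain to be tight. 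So: right skeleton, but the decisive ingredient (primal--dual optimality of the transport problems supplying the coordinates) is absent, and the step meant to launch the construction is incorrect as stated.
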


\begin{proof}
The map will be defined on the set $C=[0,1]^Q$ in the space $(\ell^\infty(Q),\|\cdot\|_\infty)$ where $Q$ denotes the
set of all integer pairs $(m,n)$ with $0\leq m\leq n<\infty$, which is isomorphic to $\ell^\infty(\NN)$.

Let us begin by observing that property (b) allows \eqref{pmn} to be reformulated as a min-cost flow on the 
complete graph $G_n$ with node set $\{0,\ldots,n\}$ and costs $d_{i-1,j-1}$ on each arc $(i,j)$,
while considering only the residual demands
$\pi_j^n-\pi_j^m$, namely
$$d_{mn}=\min_{z\geq 0}\left\{\sum_{i,j=0}^{n}\!\!z_{ij}d_{i-1,j-1}:\sum_{i=0}^{n}z_{ij}-\!\sum_{i=0}^{n}z_{ji}
=\pi_j^n\!-\!\pi_j^m\mbox{ for all }j=0,\ldots,n\right\}.\leqno(\mathcal Q_{mn})$$
The residual demands $\pi_j^n-\pi_j^m$ are balanced and add up to zero. They
are negative for $j=0,\ldots,m$ and positive for $j=m+1,\ldots,n$  
(since $\pi_j^n<\pi_j^m$ for $j\leq m$ and $\pi_j^m=0$ for $j >m$).

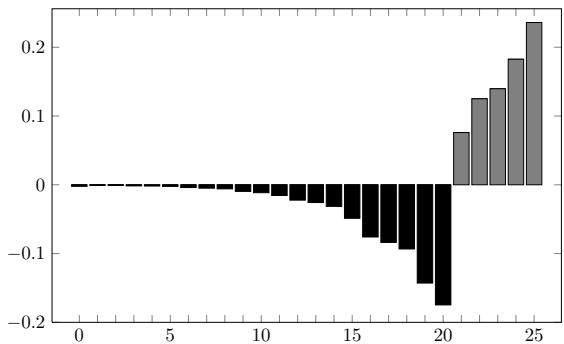
\begin{figure}[h]
\centering
\begin{tikzpicture}[scale=0.65]
    \begin{axis}[
       ybar stacked,
        ymin=-0.2,
        ymax=0.18,
        xmin=-1.5,
        xmax=26.5,
        width  = 12cm,
        height = 8cm,
        bar width=9pt,
        xtick = data,
       xticklabels from table={ 0\\ \  \\ \  \\ \ \\ \ \\ 5\\ \ \\ \ \\ \ \\ \ \\ 10\\ \ \\ \ \\ \ \\ \ \\ 15\\ \ \\ \ \\ \ \\ \ \\20\\ \ \\ \ \\ \ \\ \ \\25\\}{[index]0},
       table/header=false,
        table/row sep=\\,
        enlarge y limits={value=0.2,upper},
    ]
 \addplot[fill=gray] table[x expr=\coordindex,y index=0]{ 
    0\\
    0\\
    0\\
    0\\
    0\\
    0\\
    0\\
    0\\
    0\\
    0\\
    0\\
    0\\
    0\\
    0\\
    0\\
    0\\
    0\\
    0\\
    0\\
    0\\
    0\\ 
    0.0760\\
    0.1251\\
    0.1397\\
    0.1826\\
    0.2360\\
    };
    \addplot[fill=black] table[x expr=\coordindex,y index=0]{ 
   -0.0022\\
   -0.0008\\
   -0.0008\\
   -0.0016\\
   -0.0018\\
   -0.0023\\
   -0.0040\\
   -0.0049\\
   -0.0059\\
   -0.0096\\
   -0.0115\\
   -0.0155\\
   -0.0223\\
   -0.0258\\
   -0.0314\\
   -0.0487\\
   -0.0759\\
   -0.0838\\
   -0.0933\\
   -0.1429\\
   -0.1747\\
    0\\
    0\\
    0\\
    0\\
    0\\
    };
      \end{axis}
\end{tikzpicture}
\caption{\label{pis2} Residual demands $\pi^n-\pi^m$ ($m=20, n=25$).}
\end{figure}


\vspace{2ex}
Now, for each $(m,n)\in Q$ let  $z^{mn}$  be an optimal solution for $(\mathcal Q_{mn})$ 
and $u^{mn}$ optimal for the dual problem 
$$\max_{u}\left\{\sum_{j=0}^{n}(\pi_j^n\!-\!\pi_j^m)u_j:u_j\leq u_i+d_{i-1,j-1}\mbox{ for all $i,j=0,\ldots,n$}\right\}\leqno (\mathcal D_{mn})$$
so that by complementary slackness we have
\begin{equation}\label{cs}
z_{ij}^{mn}(u_j^{mn}\!-u_i^{mn})=z^{mn}_{ij}d_{i-1,j-1}\mbox{ for all $i,j=0,\ldots,n$}.
\end{equation}

Since $(\mathcal D_{mn})$ is invariant by addition of constants
we can fix the value of one component, say $u^{mn}_0=0$, from which 
we get $u^{mn}_j\in [0,1]$ for all $j=0,\ldots,n$.
Indeed, feasibility implies $u^{mn}_j\leq u^{mn}_0+d_{-1,j-1}=1$.
To prove non-negativity we note that $\pi_0^m>\pi_0^n$ so we
can find $i>0$ with $z^{mn}_{0i}>0$ and complementarity 
gives $u^{mn}_i=u^{mn}_0+d_{-1,i-1}=1$, which combined with feasibility 
and $d_{i-1,j-1}\leq 1$ yields $0\leq u^{mn}_i-d_{i-1,j-1}\leq u^{mn}_j$.

Note that the constraints in $(\mathcal D_{mn})$ can be written as
\begin{equation}\label{modulo}
|u^{mn}_j-u^{mn}_i|\leq d_{i-1,j-1}\mbox{ for all }i,j=0,\ldots,n.
\end{equation}
If we extend the definition of $u^{mn}$ by setting $u^{mn}_j=u^{mn}_n$ for $j>n$, 
the monotonicity in Theorem \ref{t1}(c) implies that \eqref{modulo} remains valid for all $i,j\in\NN$.

Now, let us consider the sequence $y^0,y^1,\ldots,y^{k}, \ldots\in [0,1]^Q$ defined as
\begin{equation}\label{yyy}
y^k=(u^{mn}_{k+1})_{mn\in Q}
\end{equation}
and build $x^k\in [0,1]^Q$ starting from $x^0=0\in[0,1]^Q$ by the recursion
\begin{equation}\label{rec}
x^k=(1-\alpha_k)x^{k-1}+\alpha_ky^{k-1}\quad \quad k\geq 1.
\end{equation}
If we define the operator $T$ along this sequence by setting
\begin{equation}\label{map}
Tx^k=y^k\quad \quad k=0,1,\ldots
\end{equation}
then \eqref{rec} corresponds exactly to the Krasnoselskii-Mann iterates for this map.

We will show that $\|x^m-x^n\|_\infty=d_{mn}=\|y^m-y^n\|_\infty$ for all $(m,n)\in Q$.
This implies that $T$ is an isometry and therefore, since $\ell^\infty(Q)$ is an hyperconvex 
space, by the Aronszajn-Panitchpakdi Theorem \cite{aro} it can be extended to a nonexpansive map 
$T:[0,1]^Q\to [0,1]^Q$, which completes the proof.

It remains to show that $\|x^m-x^n\|_\infty=d_{mn}=\|y^m-y^n\|_\infty$.
Since $x^k$ are the Krasnoselskii-Mann iterates for the map $x^k\mapsto Tx^k=y^k$,
as in Theorem \ref{P1} we have
\begin{equation}\label{diferencia}
x^m-x^n=\sum_{i=0}^{m}\sum_{j=0}^{n}z^{mn}_{ij}[y^{i-1}-y^{j-1}].
\end{equation}
From the definition of $y^k$ and since \eqref{modulo} holds for all $i,j\in\NN$, it follows that 
$$\|y^{i-1}-y^{j-1}\|_\infty=\|(u^{m'n'}_i-u^{m'n'}_j)_{m'n'\in Q}\|_\infty\leq d_{i-1,j-1}$$
and then the triangle inequality implies
\begin{equation}\label{siete}
\|x^m-x^n\|_\infty\leq\sum_{i=0}^{m}\sum_{j=0}^{n}z^{mn}_{ij}\|y^{i-1}-y^{j-1}\|_\infty
\leq \sum_{i=0}^{m}\sum_{j=0}^{n}z^{mn}_{ij}d_{i-1,j-1}=d_{mn}.
\end{equation}
On the other hand, if we consider only the $mn$-coordinate in \eqref{diferencia}
we have
 \begin{eqnarray*}
 x_{mn}^m-x_{mn}^n&=&\sum_{i=0}^{m}\sum_{j=0}^{n}z^{mn}_{ij}[y_{mn}^{i-1}-y_{mn}^{j-1}]\\
 &=& \sum_{i=0}^{m}\sum_{j=0}^{n}z^{mn}_{ij}[u^{mn}_i-u^{mn}_j]\\
 &=&\sum_{i=0}^{m}\sum_{j=0}^{n}z^{mn}_{ij}d_{i-1,j-1}=d_{mn}
 \end{eqnarray*}
 where the last equality follows from the complementary slackness \eqref{cs}.
 
 This combined with \eqref{siete} yields $\|x^m-x^n\|_\infty=d_{mn}$. Going back to
 \eqref{siete}, it follows that $\|y^{i-1}\!-y^{j-1}\|_\infty=d_{i-1,j-1}$ as soon as $z^{mn}_{ij}>0$
 for at least one pair $(m,n)\in Q$. 
 Now, this strict inequality holds for 
 $m=j-1$ and $n=j$: the optimal transport in that case is given by $z^{j-1,j}_{ii}\!=\pi_i^j$ 
 and $z^{j-1,j}_{ij}\!=\pi_i^{j-1}\!-\pi_i^{j}>0$  for all $i=0,\ldots,j\!-\!1$. Therefore 
 $\|y^{i-1}\!-y^{j-1}\|_\infty=d_{i-1,j-1}=\|x^{i-1}\!-x^{j-1}\|_\infty$, 
 which establishes the isometry property and completes the proof.
 \end{proof}

\section{A related stochastic process}\label{S2}
For the rest of the analysis we will exploit a probabilistic interpretation of the bounds $d_{mn}$.
To this end, let us fix simple optimal transports $z^{mn}$ for \eqref{pmn} and consider them as transition 
probabilities for a Markov chain $\mathcal D$ 
 interpreted as a race between a fox and a hare:
a hare located at $m$ is being chased by a fox located at $n$ and they sequentially jump 
from $(m,n)$ to a new state $(i-1,j-1)$ with probability $z^{mn}_{ij}$.
If the process ever attains a state $(k,k)$ on the diagonal, then with probability one it will move to a new 
diagonal state $(i-1,i-1)$ so we can represent all diagonal nodes with an absorbing state $f$ meaning that the fox captures the hare. Otherwise the process remains in the
region $m<n$ and eventually reaches a state $(-1,k)$ with $k\geq 0$, in which case the hare can be said to escape by reaching the burrow at position $-1$.
We represent all these latter nodes by an absorbing state $h$ meaning that the hare 
escapes. Hence, the state space for the chain  is
$\mathcal S=\{(m,n):0\leq m<n\}\cup\{h,f\}$ (see Figure \ref{proc}). Note that since $z^{mn}_{ij}=0$ for $j<i$ the process remains in $\mathcal S$.

   \begin{figure}[ht]
\centering
 \begin{tikzpicture}[scale=0.43,every node/.style={scale=0.6}]
  \draw[step=1cm,gray,very thin] (-2,-2) grid (10,10); 
 \draw[thick,->] (0,0) -- (9,0);
 \draw[thick,->] (0,0) -- (0,9);
 \foreach \i in {0,1,...,6}{
	\pgfmathsetmacro{\Start}{\i+1}
 	\foreach \j in {\Start,...,8} 
 	\draw[black,fill=black] (\i,\j) circle (0.1cm);
}
\draw[dashed,-] (-1,-1) -- (8,8);
 \draw[black,fill=black] (7,8) circle (0.1cm);
\node at (5,8.65) {\Large $(m,n)$};
\node[diamond,fill=black] at (-1,5) {};
\node at (-1,5.65) {\Large $h$};
\node[diamond,fill=black] at (5,3) {};
\node at (4.95,2.3) {\Large $f$};
\end{tikzpicture}
 \caption{The state space $\mathcal S$.\label{proc}}
 \end{figure}
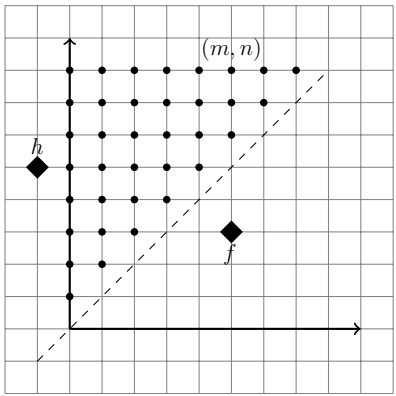
 
 For notational convenience we will write $mn$
instead of $(m,n)$.
All the  states $mn$ are transient and in fact each iteration decreases $m$ by 
at least one unit so that after at most $m$ stages the chain must attain one of the 
absorbing states $f$ or $h$. Now, the basic observation is that the recursion 
$$d_{mn}=\sum_{i=0}^{m}\sum_{j=0}^{n}z^{mn}_{ij}d_{i-1,j-1}$$
characterizes the probability that the hare escapes conditional on the fact that the process
starts in state $mn$,
that is to say $d_{mn}=\PP( \mathcal D \mbox{ attains }h|mn)$.

A similar Markov chain $\mathcal C$ was considered in \cite{csv},
except that the transition probabilities were defined by the following sub-optimal transport 
plans
\begin{equation}\label{ztilde}
\tilde z^{mn}_{ij}=\left\{
\begin{array}{cl}
\pi_i^m\pi_j^n&\mbox{for $i=0,\ldots,m$ and $j=m+1,\ldots,n$}\\
\pi_i^n&\mbox{for $i=j=0,\ldots,m$}\\
0&\mbox{otherwise.}
\end{array}\right.
\end{equation}
These transport plans produced the bounds $c_{mn}=\PP( \mathcal C \mbox{ attains }h|mn)$ which satisfy the analog recursion
$$c_{mn}=\sum_{i=0}^{m}\sum_{j=0}^{n}\tilde z^{mn}_{ij}c_{i-1,j-1}$$
with boundary conditions $c_{-1,-1}=0$ and $c_{-1,k}=c_{k,-1}=1$ for $k\in\NN$.
Since $d_{mn}$ is obtained from the optimal transports while $c_{mn}$ uses a sub-optimal
one, it follows inductively that $d_{mn}\leq c_{mn}$.
Note also that in both recursions the inner sum 
can be replaced by $\sum_{j=m+1}^n$ since the only nonzero flows for $j\leq m$ are 
$z_{jj}$ and $\tilde z_{jj}$ which do not contribute to the sums as they are multiplied
by $d_{j-1,j-1}=0$ and $c_{j-1,j-1}=0$.

Let $\mathbf C$ and  $\mathbf D$ denote the transition
matrices corresponding to $\mathcal C$ and $\mathcal D$, so that
$\mathbf C(s,s')$ and  $\mathbf D(s,s')$ are the transition probabilities 
from state $s\in\mathcal S$ to $s'\in\mathcal S$.  
Recalling that when the process starts from $mn$ it attains one of the absorbing states
after at most $m$ stages, 
the probabilities $d_{mn}$ and $c_{mn}$ can be computed as the $(mn,h)$ entries
of the $m$-th stage transition matrices $\mathbf D^m$ and $\mathbf C^m$, hence
\begin{equation}\label{infnorm}
|c_{mn}-d_{mn}|= |\mathbf D^m(mn,h)-\mathbf C^m(mn,h)|\leq|\!|\!|\mathbf D^m-\mathbf C^m|\!|\!|_\infty
\end{equation}
where $|\!|\!|A|\!|\!|_\infty=\max_{s\in\mathcal S}\sum_{s'\in\mathcal S}|A(s,s')|$ is the operator norm 
induced by $\|\cdot\|_\infty$. 
This estimate, combined with the inside-out algorithm in Remark \ref{inside-out}, allows us to show 
that for $\alpha_n\equiv \alpha\sim 1$ both processes are similar and $|c_{mn}-d_{mn}|$ becomes 
negligible. More precisely,
\begin{proposition} \label{lemma:asymp}
If $\alpha_n\equiv\alpha\in[\frac{1}{2},1[$ then we have $0\leq c_{mn}-d_{mn}\leq 4m(1\!-\!\alpha)^2$ for all $0\leq m\leq n$.
\end{proposition}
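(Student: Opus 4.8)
The lower bound $c_{mn}\ge d_{mn}$ has already been recorded above, so I only address the upper bound. The plan is to estimate $|\!|\!|\mathbf D^m-\mathbf C^m|\!|\!|_\infty$ appearing in \eqref{infnorm} by telescoping down to the one-step matrices, and then to control $|\!|\!|\mathbf D-\mathbf C|\!|\!|_\infty$ via an explicit description of the inside-out transport when $\alpha\ge\tfrac12$ (legitimate here because Theorem \ref{t1}(d) supplies the 4-point inequality, hence the no-flow-crossing property behind Remark \ref{inside-out}, hence optimality of the inside-out plan). For the reduction I would use that $\mathbf C$ and $\mathbf D$ are row-stochastic, so $|\!|\!|\mathbf C^k|\!|\!|_\infty=|\!|\!|\mathbf D^k|\!|\!|_\infty=1$ for all $k$, together with the identity $\mathbf D^m-\mathbf C^m=\sum_{k=0}^{m-1}\mathbf D^k(\mathbf D-\mathbf C)\mathbf C^{m-1-k}$, which gives $|\!|\!|\mathbf D^m-\mathbf C^m|\!|\!|_\infty\le m\,|\!|\!|\mathbf D-\mathbf C|\!|\!|_\infty$; by \eqref{infnorm} it then suffices to prove $|\!|\!|\mathbf D-\mathbf C|\!|\!|_\infty\le 4(1-\alpha)^2$. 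Since the rows of $\mathbf C$ and $\mathbf D$ at $h,f$ coincide, while the $mn$-row assigns to each state $s'$ the total of the arc masses $z^{mn}_{ij}$ (resp. $\tilde z^{mn}_{ij}$) collapsing onto $s'$, the triangle inequality yields $|\!|\!|\mathbf D-\mathbf C|\!|\!|_\infty\le\sup_{m\le n}\sum_{i,j}|z^{mn}_{ij}-\tilde z^{mn}_{ij}|$, so everything comes down to bounding this $\ell^1$-distance of transport plans by $4(1-\alpha)^2$ for each $m\le n$.

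Next I would exploit the special shape of $\pi^m,\pi^n$ for constant $\alpha$. Writing $\beta=1-\alpha\in(0,\tfrac12]$, one has $\pi_i^n=\beta^{\,n-m}\pi_i^m$ for $i\le m$, so the residual supply at source $i$ equals $(1-\beta^{\,n-m})\pi_i^m$. If $m=0$ or $n=m+1$ the transport is unique and $z^{mn}=\tilde z^{mn}$, so I assume $m\ge1$ and set $r:=n-m\ge2$. Both plans put $\pi_i^n$ on each twin arc $(i,i)$, $i\le m$, and the residual mass lives on arcs $(i,j)$ with $i\le m<j\le n$, where both share the marginals $(1-\beta^r)\pi_i^m$ at $i$ and $\pi_j^n$ at $j$. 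The decisive observation — the one place $\alpha\ge\tfrac12$ enters — is that source $m$'s residual supply $(1-\beta)(1-\beta^r)$ is at least the combined demand $\sum_{j=m+1}^{n-1}\pi_j^n=\beta-\beta^r$ of the destinations strictly between $m$ and $n$, since their difference $1-2\beta+\beta^{r+1}$ is nonnegative. Feeding this into the inside-out algorithm, source $m$ covers $m+1,\dots,n-1$ in full, so $z^{mn}_{mj}=\pi_j^n$ and $z^{mn}_{ij}=0$ for $m<j<n$, $i<m$; it then fills part of demand $n$, after which only demand $n$ remains, so the inside-out rule pours the residual supplies of $m,m-1,\dots,0$ entirely into $n$. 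In particular $z^{mn}_{in}=(1-\beta^r)\pi_i^m$ for $i<m$, and the leftover $z^{mn}_{mn}=(1-\beta)(1-\beta^r)-(\beta-\beta^r)=1-2\beta+\beta^{r+1}$.

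With this description the bound is a one-line computation I would carry out as follows. Since $z^{mn},\tilde z^{mn}$ are probability vectors on the arc set, $\sum_{i,j}|z^{mn}_{ij}-\tilde z^{mn}_{ij}|=2\sum_{i,j}(z^{mn}_{ij}-\tilde z^{mn}_{ij})^+$, and $z^{mn}$ exceeds $\tilde z^{mn}$ precisely on the arcs $(m,j)$ with $m<j<n$, with excess $(1-\pi_m^m)\pi_j^n=\beta\pi_j^n$, and on the arcs $(i,n)$ with $i<m$, with excess $((1-\beta^r)-(1-\beta))\pi_i^m=(\beta-\beta^r)\pi_i^m$ (on $(m,n)$ and on $(i,j)$ with $i<m<j<n$ one has $z^{mn}\le\tilde z^{mn}$, and the twin arcs agree). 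Summing, with $\sum_{m<j<n}\pi_j^n=\beta-\beta^r$ and $\sum_{i<m}\pi_i^m=1-\pi_m^m=\beta$, the total excess equals $\beta(\beta-\beta^r)+(\beta-\beta^r)\beta=2\beta^2(1-\beta^{\,r-1})$, hence $\sum_{i,j}|z^{mn}_{ij}-\tilde z^{mn}_{ij}|=4\beta^2(1-\beta^{\,r-1})\le 4\beta^2=4(1-\alpha)^2$. Together with the reduction of the first paragraph and the known lower bound, this establishes the Proposition.

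The hard part is the second step: proving that the inside-out algorithm outputs exactly this flow. It hinges on the capacity inequality $(1-\beta)(1-\beta^r)\ge\beta-\beta^r$, which is precisely what breaks down once $\alpha<\tfrac12$ — in that regime source $m$ runs dry before reaching destination $n$ and the optimal transport spills over many further arcs, so the clean two-band structure is lost. Apart from that one only needs that the inside-out rule serves each demand $j>m$ from the largest available source (built into its definition and justified by no-flow-crossing), and the routine bookkeeping of the degenerate cases $m=0$, $r=1$, and $\beta=\tfrac12$ with $r\to\infty$ (where $z^{mn}_{mn}=0$), none of which cause trouble.
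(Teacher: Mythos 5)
Your proof is correct and follows essentially the same route as the paper: reduce via \eqref{infnorm} and an $m$-fold telescoping (which the paper phrases as a Mean Value Theorem argument) to the one-step bound $|\!|\!|\mathbf D-\mathbf C|\!|\!|_\infty\leq 4(1-\alpha)^2$, identify the inside-out optimal transport explicitly for $\alpha\geq\tfrac{1}{2}$ exactly as in Lemma \ref{prop1}, and sum the discrepancies $|z^{mn}_{ij}-\tilde z^{mn}_{ij}|$. The only cosmetic difference is that the paper discards the arcs feeding the absorbing states $h$ and $f$ by exact cancellation and sums the remaining geometric terms directly, whereas you bound the full arc-level $\ell^1$ distance through the positive-part identity, arriving at the slightly cruder but equally sufficient value $4(1-\alpha)^2\bigl(1-(1-\alpha)^{n-m-1}\bigr)$.
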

\begin{proof} 
See Appendix B.
\end{proof}

\section{Proof of Theorem \ref{thm:main}}\label{S3}
\vspace{2ex}
We now have all the ingredients to prove Theorem~\ref{thm:main}.
Recall that the recursive bounds $d_{mn}$ and $c_{mn}$ were introduced with the aim of finding a sharp 
estimate of $\|x^n-Tx^n\|=\|x^{n+1}-x^n\|/\alpha_{n+1}$, for which the relevant terms are $d_{n,n+1}$ and 
$c_{n,n+1}$.
Let us consider a constant sequence $\alpha_n\equiv\alpha$ (with $\alpha$ to 
be chosen later) and denote $c_{mn}(\alpha), d_{mn}(\alpha)$ the corresponding bounds.
Let us also define
$$
\begin{aligned}
\kappa_n(\alpha)&=\sqrt{n\,\alpha(1\!-\!\alpha)}\;d_{n,n+1}(\alpha)/\alpha,\\
\tilde\kappa_n(\alpha)&=\sqrt{n\,\alpha(1\!-\!\alpha)}\;c_{n,n+1}(\alpha)/\alpha.
\end{aligned}
$$ 

Considering the map $T$ and the KM iterates as given by Theorem \ref{teoint} we have
$$\sqrt{n\,\alpha(1\!-\!\alpha)}\;\|x^n-Tx^n\|=\kappa_n(\alpha)$$
so that Theorem \ref{thm:main} will be proved if we show that 
$\kappa_n(\alpha)$ is arbitrarily close to $1/\sqrt{\pi}$ for 
appropriate values of $n$ and $\alpha$.
We claim that $\kappa_n(\alpha)$ and $\tilde\kappa_n(\alpha)$ 
are very similar for $\alpha\sim 1$ and close to $1/\sqrt{\pi}$ for large $n$. As an illustration, Figure \ref{comp} 
compares these quantities for 4 different $\alpha$'s, where each plot shows the curves $\tilde\kappa_n(\alpha)$ (upper) and 
 $\kappa_n(\alpha)$ (lower) as a function of $n$ for  $n=1,\ldots,300$.
\begin{figure}[h]
\begin{center}
\subfloat[$\alpha=0.5$]{
\begin{tikzpicture}[scale=0.8]
    \begin{axis}[
        table/header=false,
        table/row sep=\\,
        xtick=\empty,
       xmin= -1.0,
       xmax= 301.0,
       ymin= 0.375,
      ymax=0.58,
      ytick={0.4,0.5},
        extra y ticks={0.56419},
       extra y tick labels={$\frac{1}{\sqrt{\pi}}$}
    ]
   \addplot[dashed] coordinates {(0,0.56419) (300,0.56419)};
    \addplot [mark=none] table[x expr=\coordindex,y index=0]{ 
 0.375 \\ 0.441942 \\ 0.473608 \\ 0.492188 \\ 0.504425 \\ 0.513101 \\  0.519574 \\ 0.52459 \\ 0.528591 \\ 0.531857 \\ 
 0.534574 \\ 0.53687 \\ 0.538835 \\ 0.540536 \\ 0.542024 \\ 0.543335 \\ 0.5445 \\ 0.545541 \\ 0.546478 \\ 0.547325 \\ 
 0.548095 \\ 0.548798 \\ 0.549442 \\ 0.550034 \\ 0.55058 \\ 0.551086 \\ 0.551555 \\ 0.551993 \\ 0.5524 \\ 0.552782 \\ 
 0.553139 \\ 0.553475 \\ 0.553791 \\ 0.554089 \\ 0.55437 \\ 0.554636 \\ 0.554888 \\ 0.555127 \\ 0.555354 \\ 0.55557 \\
 0.555776 \\ 0.555972 \\ 0.556159 \\ 0.556338 \\ 0.556509 \\ 0.556673 \\ 0.55683 \\ 0.55698 \\ 0.557125 \\ 0.557263 \\
 0.557397 \\ 0.557525 \\ 0.557649 \\ 0.557768 \\ 0.557883 \\ 0.557994 \\ 0.558101 \\ 0.558204 \\ 0.558304 \\ 0.558401 \\
 0.558494 \\ 0.558585 \\ 0.558672 \\ 0.558757 \\ 0.55884 \\ 0.55892 \\ 0.558997 \\ 0.559073 \\ 0.559146 \\ 0.559217 \\
 0.559286 \\ 0.559353 \\ 0.559419 \\ 0.559482 \\ 0.559544 \\ 0.559605 \\ 0.559664 \\ 0.559721 \\ 0.559777 \\ 0.559832 \\
 0.559885 \\ 0.559937 \\ 0.559987 \\ 0.560037 \\ 0.560085 \\ 0.560132 \\ 0.560179 \\ 0.560224 \\ 0.560268 \\ 0.560311 \\
 0.560353 \\ 0.560394 \\ 0.560435 \\ 0.560474 \\ 0.560513 \\ 0.560551 \\ 0.560588 \\ 0.560625 \\ 0.56066 \\ 0.560695 \\
 0.56073 \\ 0.560763 \\ 0.560796 \\ 0.560828 \\ 0.56086 \\ 0.560891 \\ 0.560922 \\ 0.560952 \\ 0.560981 \\ 0.56101 \\
 0.561039 \\ 0.561067 \\ 0.561094 \\ 0.561121 \\ 0.561147 \\ 0.561173 \\ 0.561199 \\ 0.561224 \\ 0.561249 \\ 0.561273 \\
 0.561297 \\ 0.561321 \\ 0.561344 \\ 0.561367 \\ 0.561389 \\ 0.561411 \\ 0.561433 \\ 0.561454 \\ 0.561475 \\ 0.561496 \\
 0.561516 \\ 0.561537 \\ 0.561556 \\ 0.561576 \\ 0.561595 \\ 0.561614 \\ 0.561633 \\ 0.561651 \\ 0.561669 \\ 0.561687 \\
 0.561705 \\ 0.561722 \\ 0.561739 \\ 0.561756 \\ 0.561773 \\ 0.561789 \\ 0.561806 \\ 0.561822 \\ 0.561837 \\ 0.561853 \\
 0.561868 \\ 0.561884 \\ 0.561899 \\ 0.561913 \\ 0.561928 \\ 0.561942 \\ 0.561957 \\ 0.561971 \\ 0.561985 \\ 0.561998 \\
 0.562012 \\ 0.562025 \\ 0.562038 \\ 0.562051 \\ 0.562064 \\ 0.562077 \\ 0.56209 \\ 0.562102 \\ 0.562114 \\ 0.562126 \\
 0.562138 \\ 0.56215 \\ 0.562162 \\ 0.562174 \\ 0.562185 \\ 0.562196 \\ 0.562208 \\ 0.562219 \\ 0.56223 \\ 0.56224 \\
 0.562251 \\ 0.562262 \\ 0.562272 \\ 0.562283 \\ 0.562293 \\ 0.562303 \\ 0.562313 \\ 0.562323 \\ 0.562333 \\ 0.562343 \\
 0.562352 \\ 0.562362 \\ 0.562371 \\ 0.56238 \\ 0.56239 \\ 0.562399 \\  0.562408 \\ 0.562417 \\ 0.562426 \\ 0.562434 \\
 0.562443 \\ 0.562452 \\ 0.56246 \\ 0.562469 \\ 0.562477 \\ 0.562485 \\ 0.562494 \\ 0.562502 \\ 0.56251 \\ 0.562518 \\
 0.562526 \\ 0.562533 \\ 0.562541 \\ 0.562549 \\ 0.562556 \\ 0.562564 \\ 0.562571 \\ 0.562579 \\ 0.562586 \\ 0.562593 \\
 0.562601 \\ 0.562608 \\ 0.562615 \\ 0.562622 \\ 0.562629 \\ 0.562636 \\ 0.562642 \\ 0.562649 \\ 0.562656 \\ 0.562663 \\
 0.562669 \\ 0.562676 \\ 0.562682 \\ 0.562689 \\ 0.562695 \\ 0.562701 \\ 0.562707 \\ 0.562714 \\ 0.56272 \\ 0.562726 \\
 0.562732 \\ 0.562738 \\ 0.562744 \\ 0.56275 \\ 0.562756 \\ 0.562761 \\ 0.562767 \\ 0.562773 \\ 0.562779 \\ 0.562784 \\
 0.56279 \\ 0.562795 \\ 0.562801 \\ 0.562806 \\ 0.562812 \\ 0.562817 \\ 0.562822 \\ 0.562828 \\ 0.562833 \\ 0.562838 \\
 0.562843 \\ 0.562848 \\ 0.562853 \\ 0.562859 \\ 0.562864 \\ 0.562868 \\ 0.562873 \\ 0.562878 \\ 0.562883 \\ 0.562888 \\
 0.562893 \\ 0.562898 \\ 0.562902 \\ 0.562907 \\ 0.562912 \\ 0.562916 \\ 0.562921 \\ 0.562925 \\ 0.56293 \\ 0.562934 \\
 0.562939 \\ 0.562943 \\ 0.562948 \\ 0.562952 \\ 0.562956 \\ 0.562961 \\ 0.562965 \\ 0.562969 \\ 0.562973 \\ 0.562977 \\
 0.562982 \\ 0.562986 \\ 0.56299 \\ 0.562994 \\ 0.562998 \\ 0.563002 \\ 0.563006 \\ 0.56301 \\ 0.563014 \\ 0.563018 \\
};
  \addplot [mark=none] table[x expr=\coordindex,y index=0]{
0.375\\0.4419417\\0.4668418\\0.4785156\\ 0.4840219\\0.4866107\\0.4876618\\ 0.4878734\\ 0.4876194\\0.4871042\\0.4864480\\ 0.4857208\\ 0.4849652\\0.4842068\\ 0.4834610\\0.4827368\\0.4820393\\0.4813711\\0.4807330\\0.4801250\\0.4795465\\0.4789964\\0.4784734\\0.4779761\\0.4775032\\0.4770532\\0.4766248\\0.4762167\\0.4758276\\0.4754565\\0.4751021\\0.4747635\\0.4744398\\0.4741300\\0.4738333\\0.4735490\\0.4732763\\0.4730145\\ 0.4727631\\0.4725215\\0.4722890\\0.4720654\\0.4718499\\0.4716423\\0.4714421\\0.4712490\\0.4710625\\0.4708823\\0.4707082\\0.4705399\\0.470377 \\0.4702193\\0.4700666\\0.4699187\\0.4697752\\0.4696361\\0.4695011\\0.4693701\\0.4692429\\0.4691193\\0.4689992\\0.4688824\\0.4687688\\0.4686583\\0.4685507\\0.4684460\\0.4683439\\0.4682445\\0.4681477\\0.4680532\\0.4679611\\0.4678712\\0.4677835\\0.4676979\\0.4676144\\0.4675327\\0.4674530\\0.4673751\\0.4672989\\0.4672245\\0.4671516\\0.4670804\\0.4670107\\0.4669425\\0.4668758\\0.4668104\\0.4667464\\0.4666837\\0.4666223\\0.4665621\\0.4665031\\0.4664453\\0.4663886\\0.4663330\\0.4662784\\0.4662249\\0.4661725\\0.4661209\\0.4660704\\0.4660207\\0.465972 \\0.4659241\\0.4658771\\0.4658309\\0.4657856\\0.4657410\\0.4656972\\0.4656541\\0.4656117\\0.4655701\\0.4655292\\0.4654889\\0.4654493\\0.4654103\\0.4653720\\0.4653343\\0.4652971\\0.4652606\\0.4652246\\0.4651892\\0.4651543\\0.4651200\\0.4650862\\0.4650529\\0.4650200\\0.4649877\\0.4649559\\0.4649245\\0.4648935\\0.4648630\\0.4648330\\0.4648034\\0.4647741\\0.4647453\\0.4647169\\0.4646889\\0.4646613\\0.4646340\\0.4646071\\0.4645806\\0.4645544\\0.4645286\\0.4645031\\0.4644780\\0.4644532\\0.4644287\\0.4644045\\0.4643806\\0.4643570\\0.4643337\\0.4643107\\0.4642880\\0.4642656\\0.4642434\\0.4642215\\0.4641999\\0.4641786\\0.4641575\\0.4641366\\0.4641160\\0.4640957\\0.4640755\\0.4640556\\0.4640360\\0.4640166\\0.4639973\\0.4639784\\0.4639596\\0.4639410\\0.4639227\\0.4639045\\0.4638865\\0.4638688\\0.4638512\\0.4638339\\0.4638167\\0.4637997\\0.4637829\\0.4637662\\0.4637498\\0.4637335\\0.4637173\\0.4637014\\0.4636856\\0.4636700\\0.4636545\\0.4636392\\0.4636241\\0.4636091\\0.4635942\\0.4635795\\0.4635650\\0.4635506\\0.4635363\\0.4635222\\0.4635082\\0.4634944\\0.4634806\\0.4634671\\0.4634536\\0.4634403\\0.4634271\\0.4634140\\0.4634011\\0.4633882\\0.4633755\\0.4633629\\0.4633504\\ 0.4633381\\0.4633258\\0.4633137\\0.4633017\\ 0.4632897\\0.4632779\\0.4632662\\0.4632546\\0.4632431\\0.4632317\\0.4632204\\0.4632092\\0.4631981\\0.4631871\\0.4631762\\0.4631654\\0.4631547\\0.4631440\\ 0.4631335\\0.4631230\\0.4631127\\0.4631024\\0.4630922\\0.4630821\\0.4630721\\0.4630621\\0.4630522\\0.4630425\\0.4630328\\0.4630231\\ 0.4630136\\0.4630041\\0.4629947\\0.4629854\\ 0.4629762\\0.462967 \\0.4629579\\0.4629489\\0.4629399\\0.4629310\\0.4629222\\0.4629135\\0.4629048\\0.4628962\\ 0.4628876\\ 0.4628791\\ 0.4628707\\0.4628624\\ 0.4628541\\0.4628458\\0.4628377\\0.4628295\\0.4628215\\0.4628135\\0.4628056\\0.4627977\\0.4627899\\0.4627821\\0.4627744\\0.4627668\\0.4627592\\0.4627517\\0.4627442\\0.4627367\\0.4627294\\0.4627220\\0.4627148\\0.4627075\\0.4627004\\0.4626932\\0.4626862\\0.4626792\\0.4626722\\0.4626653\\0.4626584\\0.4626515\\0.4626448\\0.4626380\\0.4626313\\0.4626247\\0.4626181\\0.4626115\\0.4626050\\0.4625985\\0.4625921\\0.4625857\\0.4625794\\0.4625730\\0.4625668\\0.4625606\\0.4625544\\0.4625482\\
};
    \end{axis}
\end{tikzpicture}}
\subfloat[$\alpha=0.65$]{ 

\begin{tikzpicture}[scale=0.8]
    \begin{axis}[
        table/header=false,
        table/row sep=\\,
        xtick=\empty,
       xmin= -1.0,
      xmax=301.0,
       ymin= 0.375,
      ymax=0.58,
      ytick={0.4,0.5},
        extra y ticks={0.56419},
       extra y tick labels={$\frac{1}{\sqrt{\pi}}$}
    ]
   \addplot[dashed] coordinates {(0,0.56419) (300,0.56419)};
    \addplot [mark=none] table[x expr=\coordindex,y index=0]{ 
 0.368459 \\
 0.437446 \\
 0.470207 \\
 0.489453 \\
 0.502138 \\
 0.511136 \\
 0.517851 \\
 0.523056 \\
 0.527209 \\
 0.5306 \\
 0.533421 \\
 0.535805 \\
 0.537845 \\
 0.539612 \\
 0.541157 \\
 0.542519 \\
 0.543729 \\
 0.544811 \\
 0.545784 \\
 0.546664 \\
 0.547464 \\
 0.548194 \\
 0.548863 \\
 0.549478 \\
 0.550046 \\
 0.550571 \\
 0.551059 \\
 0.551513 \\
 0.551937 \\
 0.552333 \\
 0.552705 \\
 0.553054 \\
 0.553382 \\
 0.553691 \\
 0.553984 \\
 0.55426 \\
 0.554522 \\
 0.55477 \\
 0.555006 \\
 0.555231 \\
 0.555445 \\
 0.555648 \\
 0.555843 \\
 0.556029 \\
 0.556206 \\
 0.556377 \\
 0.55654 \\
 0.556696 \\
 0.556846 \\
 0.55699 \\
 0.557129 \\
 0.557262 \\
 0.557391 \\
 0.557515 \\
 0.557634 \\
 0.557749 \\
 0.557861 \\
 0.557968 \\
 0.558072 \\
 0.558172 \\
 0.558269 \\
 0.558363 \\
 0.558455 \\
 0.558543 \\
 0.558629 \\
 0.558712 \\
 0.558792 \\
 0.558871 \\
 0.558947 \\
 0.559021 \\
 0.559093 \\
 0.559162 \\
 0.55923 \\
 0.559297 \\
 0.559361 \\
 0.559424 \\
 0.559485 \\
 0.559545 \\
 0.559603 \\
 0.559659 \\
 0.559715 \\
 0.559769 \\
 0.559821 \\
 0.559873 \\
 0.559923 \\
 0.559972 \\
 0.56002 \\
 0.560067 \\
 0.560113 \\
 0.560158 \\
 0.560202 \\
 0.560245 \\
 0.560287 \\
 0.560328 \\
 0.560368 \\
 0.560407 \\
 0.560446 \\
 0.560484 \\
 0.560521 \\
 0.560557 \\
 0.560593 \\
 0.560628 \\
 0.560662 \\
 0.560696 \\
 0.560729 \\
 0.560761 \\
 0.560793 \\
 0.560824 \\
 0.560855 \\
 0.560885 \\
 0.560914 \\
 0.560943 \\
 0.560972 \\
 0.561 \\
 0.561027 \\
 0.561054 \\
 0.561081 \\
 0.561107 \\
 0.561133 \\
 0.561158 \\
 0.561183 \\
 0.561207 \\
 0.561231 \\
 0.561255 \\
 0.561278 \\
 0.561301 \\
 0.561324 \\
 0.561346 \\
 0.561368 \\
 0.56139 \\
 0.561411 \\
 0.561432 \\
 0.561452 \\
 0.561473 \\
 0.561493 \\
 0.561512 \\
 0.561532 \\
 0.561551 \\
 0.56157 \\
 0.561588 \\
 0.561607 \\
 0.561625 \\
 0.561643 \\
 0.56166 \\
 0.561677 \\
 0.561695 \\
 0.561711 \\
 0.561728 \\
 0.561744 \\
 0.561761 \\
 0.561777 \\
 0.561792 \\
 0.561808 \\
 0.561823 \\
 0.561839 \\
 0.561854 \\
 0.561868 \\
 0.561883 \\
 0.561897 \\
 0.561912 \\
 0.561926 \\
 0.56194 \\
 0.561953 \\
 0.561967 \\
 0.56198 \\
 0.561993 \\
 0.562007 \\
 0.562019 \\
 0.562032 \\
 0.562045 \\
 0.562057 \\
 0.56207 \\
 0.562082 \\
 0.562094 \\
 0.562106 \\
 0.562118 \\
 0.562129 \\
 0.562141 \\
 0.562152 \\
 0.562163 \\
 0.562175 \\
 0.562186 \\
 0.562196 \\
 0.562207 \\
 0.562218 \\
 0.562228 \\
 0.562239 \\
 0.562249 \\
 0.562259 \\
 0.56227 \\
 0.56228 \\
 0.562289 \\
 0.562299 \\
 0.562309 \\
 0.562319 \\
 0.562328 \\
 0.562337 \\
 0.562347 \\
 0.562356 \\
 0.562365 \\
 0.562374 \\
 0.562383 \\
 0.562392 \\
 0.562401 \\
 0.562409 \\
 0.562418 \\
 0.562427 \\
 0.562435 \\
 0.562443 \\
 0.562452 \\
 0.56246 \\
 0.562468 \\
 0.562476 \\
 0.562484 \\
 0.562492 \\
 0.5625 \\
 0.562507 \\
 0.562515 \\
 0.562523 \\
 0.56253 \\
 0.562538 \\
 0.562545 \\
 0.562553 \\
 0.56256 \\
 0.562567 \\
 0.562574 \\
 0.562581 \\
 0.562588 \\
 0.562595 \\
 0.562602 \\
 0.562609 \\
 0.562616 \\
 0.562623 \\
 0.562629 \\
 0.562636 \\
 0.562642 \\
 0.562649 \\
 0.562655 \\
 0.562662 \\
 0.562668 \\
 0.562674 \\
 0.562681 \\
 0.562687 \\
 0.562693 \\
 0.562699 \\
 0.562705 \\
 0.562711 \\
 0.562717 \\
 0.562723 \\
 0.562729 \\
 0.562734 \\
 0.56274 \\
 0.562746 \\
 0.562752 \\
 0.562757 \\
 0.562763 \\
 0.562768 \\
 0.562774 \\
 0.562779 \\
 0.562785 \\
 0.56279 \\
 0.562795 \\
 0.562801 \\
 0.562806 \\
 0.562811 \\
 0.562816 \\
 0.562821 \\
 0.562826 \\
 0.562832 \\
 0.562837 \\
 0.562842 \\
 0.562846 \\
 0.562851 \\
 0.562856 \\
 0.562861 \\
 0.562866 \\
 0.562871 \\
 0.562875 \\
 0.56288 \\
 0.562885 \\
 0.562889 \\
 0.562894 \\
 0.562898 \\
 0.562903 \\
 0.562908 \\
 0.562912 \\
 0.562916 \\
 0.562921 \\
 0.562925 \\
 0.56293 \\
 0.562934 \\
 0.562938 \\
 0.562942 \\
 0.562947 \\
 0.562951 \\
 0.562955 \\
 0.562959 \\
 0.562963 \\
 0.562967 \\
 0.562971 \\
};
  \addplot [mark=none] table[x expr=\coordindex,y index=0]{
0.3684590\\ 0.4374456\\ 0.4668027\\ 0.4815087\\ 0.4896003\\ 0.4942512\\ 0.4969860\\ 0.4985937\\ 0.4995098\\ 0.4999892\\ 0.5001874\\ 0.5002019\\  
      0.5000959\\ 0.4999109\\ 0.4996749\\ 0.4994069\\ 0.4991200\\ 0.4988233\\ 0.4985230\\ 0.4982234\\ 0.4979276\\ 0.4976376\\ 0.4973546\\ 0.4970797\\ 0.4968132\\ 0.4965554\\ 0.4963065\\ 0.4960662\\ 0.4958345\\ 0.4956113\\ 0.4953961\\ 0.4951888\\ 0.494989\\ 0.4947965\\ 0.4946109\\ 0.4944321\\ 0.4942595\\0.4940931\\ 0.4939325\\ 0.4937774\\ 0.4936277\\ 0.4934830\\ 0.4933431\\ 0.4932079\\ 0.4930770\\ 0.4929504\\ 0.4928279\\ 0.4927091\\ 0.4925941\\ 0.4924825\\ 0.4923744\\ 0.4922695\\ 0.4921676\\ 0.4920688\\ 0.4919727\\ 0.4918794\\ 0.4917888\\ 0.4917006\\ 0.4916148\\ 0.4915314\\ 0.4914502\\ 0.4913711\\0.4912941\\ 0.4912190\\ 0.4911459\\ 0.4910746\\ 0.4910051\\ 0.4909373\\ 0.4908711\\ 0.4908066\\ 0.4907435\\ 0.4906820\\ 0.4906219\\ 0.4905631\\0.4905057\\ 0.4904496\\ 0.4903947\\ 0.4903410\\ 0.4902885\\ 0.4902372\\ 0.4901869\\ 0.4901377\\ 0.4900895\\ 0.4900424\\ 0.4899962\\ 0.4899509\\ 0.4899065\\ 0.4898630\\ 0.4898204\\ 0.4897786\\ 0.4897376\\ 0.4896974\\ 0.4896580\\ 0.4896193\\ 0.4895814\\ 0.4895441\\ 0.4895075\\ 0.4894716\\ 0.4894363\\ 0.4894017\\ 0.4893676\\ 0.4893342\\ 0.4893013\\ 0.4892690\\ 0.4892373\\ 0.4892061\\ 0.4891754\\ 0.4891453\\ 0.4891156\\ 0.4890864\\ 0.4890577\\ 0.4890295\\0.4890017\\ 0.4889743\\ 0.4889474\\ 0.4889209\\ 0.4888949\\ 0.4888692\\ 0.4888439\\ 0.4888190\\ 0.4887945\\ 0.4887703\\ 0.4887465\\ 0.4887230\\ 0.4886999\\ 0.4886772\\ 0.4886547\\ 0.4886326\\ 0.4886108\\ 0.4885893\\ 0.4885681\\ 0.4885472\\ 0.4885266\\ 0.4885062\\ 0.4884862\\ 0.4884664\\ 0.4884469\\ 0.4884276\\ 0.4884086\\ 0.4883899\\ 0.4883714\\ 0.4883531\\ 0.4883351\\ 0.4883173\\ 0.4882997\\ 0.4882824\\ 0.4882653\\ 0.4882484\\ 0.4882317\\ 0.4882152\\ 0.4881989\\ 0.4881828\\ 0.4881669\\ 0.4881512\\ 0.4881357\\ 0.4881204\\ 0.4881052\\ 0.4880902\\ 0.4880755\\ 0.4880608\\ 0.4880464\\ 0.4880321\\ 0.488018\\ 0.4880040\\ 0.4879902\\ 0.4879766\\ 0.4879631\\ 0.4879498\\ 0.4879366\\ 0.4879235\\ 0.4879106\\ 0.4878979\\ 0.4878853\\ 0.4878728\\ 0.4878604\\ 0.4878482\\ 0.4878361\\ 0.4878241\\ 0.4878123\\ 0.4878006\\ 0.4877890\\ 0.4877775\\ 0.4877662\\ 0.4877549\\ 0.4877438\\ 0.4877328\\ 0.4877219\\ 0.4877111\\ 0.4877005\\ 0.4876899\\ 0.4876794\\ 0.4876690\\ 0.4876588\\ 0.4876486\\ 0.4876385\\ 0.4876286\\ 0.4876187\\ 0.4876089\\ 0.4875992\\ 0.4875896\\ 0.4875801\\ 0.4875707\\ 0.4875614\\ 0.4875522\\ 0.4875430\\ 0.4875339\\ 0.4875250\\ 0.4875160\\ 0.4875072\\ 0.4874985\\ 0.4874898\\ 0.48748 12\\ 0.4874727\\ 0.4874643\\ 0.4874559\\ 0.4874476\\ 0.4874394\\ 0.4874313\\ 0.4874232\\ 0.4874152\\ 0.4874073\\ 0.4873994\\ 0.4873916\\ 0.4873839\\ 0.4873762\\ 0.4873686\\ 0.4873611\\ 0.4873536\\ 0.4873462\\ 0.4873388\\ 0.4873316\\ 0.4873243\\ 0.4873172\\ 0.4873101\\ 0.487303\\ 0.487296\\ 0.4872891\\ 0.4872822\\ 0.4872753\\ 0.4872686\\ 0.4872619\\ 0.4872552\\ 0.4872486\\ 0.4872420\\ 0.4872355\\ 0.4872290\\ 0.4872226\\ 0.4872163\\ 0.4872099\\ 0.4872037\\ 0.4871975\\ 0.4871913\\ 0.4871852\\ 0.4871791\\ 0.4871731\\ 0.4871671\\ 0.4871611\\ 0.4871552\\ 0.4871494\\ 0.4871436\\ 0.4871378\\ 0.4871321\\ 0.4871264\\ 0.4871208\\ 0.4871152\\ 0.4871096\\ 0.4871041\\ 0.4870986\\ 0.4870932\\ 0.4870878\\ 0.4870824\\ 0.4870771\\ 0.4870718\\ 0.4870665\\0.4870613\\ 0.4870561\\ 0.487051\\ 0.4870459\\ 0.4870408\\ 0.4870358\\ 0.4870308\\ 0.4870258\\ 0.4870209\\ 0.4870160\\ 0.4870111\\ 0.4870063\\ 0.4870015\\ 0.4869967\\ 0.4869920\\ 0.4869873\\ 0.4869826\\ 0.4869779\\ 0.4869733\\ 0.4869687\\ 0.4869642\\ 0.4869597\\ 0.4869552\\ 0.4869507\\ 0.4869463 \\ 0.4869419\\
};
    \end{axis}
\end{tikzpicture}} 
\end{center}  
\vspace{-0.5ex}

\begin{center}
\hspace*{0.1cm}
\subfloat[$\alpha=0.85$]{

\begin{tikzpicture}[scale=0.8]
    \begin{axis}[
        table/header=false,
        table/row sep=\\,
        xtick=\empty,
         xmin= -1.0,
      xmax=301.0,
         ymin= 0.375,
      ymax=0.58,
      ytick={0.4,0.5},
        extra y ticks={0.56419},
       extra y tick labels={$ \frac{1}{\sqrt{\pi}}$}
    ]
   \addplot[dashed] coordinates {(0,0.56419) (300,0.56419)};
    \addplot [mark=none] table[x expr=\coordindex,y index=0]{ 
 0.311545 \\
 0.392625 \\
 0.435817 \\
 0.46228 \\
 0.47992 \\
 0.492417 \\
 0.501696 \\
 0.508846 \\
 0.51452 \\
 0.519132 \\
 0.522957 \\
 0.52618 \\
 0.528934 \\
 0.531314 \\
 0.533392 \\
 0.535223 \\
 0.536848 \\
 0.5383 \\
 0.539605 \\
 0.540785 \\
 0.541857 \\
 0.542835 \\
 0.54373 \\
 0.544554 \\
 0.545313 \\
 0.546017 \\
 0.546669 \\
 0.547276 \\
 0.547843 \\
 0.548373 \\
 0.548869 \\
 0.549336 \\
 0.549775 \\
 0.550188 \\
 0.550579 \\
 0.550948 \\
 0.551298 \\
 0.55163 \\
 0.551945 \\
 0.552244 \\
 0.55253 \\
 0.552802 \\
 0.553061 \\
 0.55331 \\
 0.553547 \\
 0.553774 \\
 0.553992 \\
 0.5542 \\
 0.554401 \\
 0.554593 \\
 0.554778 \\
 0.554956 \\
 0.555128 \\
 0.555293 \\
 0.555452 \\
 0.555606 \\
 0.555754 \\
 0.555897 \\
 0.556036 \\
 0.55617 \\
 0.5563 \\
 0.556425 \\
 0.556547 \\
 0.556664 \\
 0.556779 \\
 0.55689 \\
 0.556997 \\
 0.557102 \\
 0.557203 \\
 0.557302 \\
 0.557397 \\
 0.557491 \\
 0.557581 \\
 0.55767 \\
 0.557756 \\
 0.557839 \\
 0.557921 \\
 0.558 \\
 0.558078 \\
 0.558153 \\
 0.558227 \\
 0.558299 \\
 0.558369 \\
 0.558438 \\
 0.558505 \\
 0.55857 \\
 0.558634 \\
 0.558697 \\
 0.558758 \\
 0.558818 \\
 0.558876 \\
 0.558933 \\
 0.558989 \\
 0.559044 \\
 0.559098 \\
 0.55915 \\
 0.559202 \\
 0.559252 \\
 0.559302 \\
 0.55935 \\
 0.559398 \\
 0.559444 \\
 0.55949 \\
 0.559535 \\
 0.559579 \\
 0.559622 \\
 0.559664 \\
 0.559706 \\
 0.559747 \\
 0.559787 \\
 0.559826 \\
 0.559865 \\
 0.559903 \\
 0.55994 \\
 0.559977 \\
 0.560013 \\
 0.560048 \\
 0.560083 \\
 0.560117 \\
 0.560151 \\
 0.560184 \\
 0.560217 \\
 0.560249 \\
 0.56028 \\
 0.560311 \\
 0.560342 \\
 0.560372 \\
 0.560402 \\
 0.560431 \\
 0.56046 \\
 0.560488 \\
 0.560516 \\
 0.560543 \\
 0.56057 \\
 0.560597 \\
 0.560623 \\
 0.560649 \\
 0.560675 \\
 0.5607 \\
 0.560724 \\
 0.560749 \\
 0.560773 \\
 0.560797 \\
 0.56082 \\
 0.560843 \\
 0.560866 \\
 0.560888 \\
 0.560911 \\
 0.560933 \\
 0.560954 \\
 0.560975 \\
 0.560996 \\
 0.561017 \\
 0.561038 \\
 0.561058 \\
 0.561078 \\
 0.561098 \\
 0.561117 \\
 0.561136 \\
 0.561155 \\
 0.561174 \\
 0.561192 \\
 0.561211 \\
 0.561229 \\
 0.561247 \\
 0.561264 \\
 0.561282 \\
 0.561299 \\
 0.561316 \\
 0.561333 \\
 0.561349 \\
 0.561366 \\
 0.561382 \\
 0.561398 \\
 0.561414 \\
 0.56143 \\
 0.561445 \\
 0.561461 \\
 0.561476 \\
 0.561491 \\
 0.561506 \\
 0.56152 \\
 0.561535 \\
 0.561549 \\
 0.561563 \\
 0.561577 \\
 0.561591 \\
 0.561605 \\
 0.561619 \\
 0.561632 \\
 0.561645 \\
 0.561659 \\
 0.561672 \\
 0.561685 \\
 0.561697 \\
 0.56171 \\
 0.561723 \\
 0.561735 \\
 0.561747 \\
 0.561759 \\
 0.561772 \\
 0.561783 \\
 0.561795 \\
 0.561807 \\
 0.561819 \\
 0.56183 \\
 0.561841 \\
 0.561853 \\
 0.561864 \\
 0.561875 \\
 0.561886 \\
 0.561896 \\
 0.561907 \\
 0.561918 \\
 0.561928 \\
 0.561939 \\
 0.561949 \\
 0.561959 \\
 0.561969 \\
 0.56198 \\
 0.561989 \\
 0.561999 \\
 0.562009 \\
 0.562019 \\
 0.562028 \\
 0.562038 \\
 0.562047 \\
 0.562057 \\
 0.562066 \\
 0.562075 \\
 0.562084 \\
 0.562093 \\
 0.562102 \\
 0.562111 \\
 0.56212 \\
 0.562129 \\
 0.562137 \\
 0.562146 \\
 0.562155 \\
 0.562163 \\
 0.562171 \\
 0.56218 \\
 0.562188 \\
 0.562196 \\
 0.562204 \\
 0.562212 \\
 0.56222 \\
 0.562228 \\
 0.562236 \\
 0.562244 \\
 0.562252 \\
 0.562259 \\
 0.562267 \\
 0.562274 \\
 0.562282 \\
 0.562289 \\
 0.562297 \\
 0.562304 \\
 0.562311 \\
 0.562318 \\
 0.562326 \\
 0.562333 \\
 0.56234 \\
 0.562347 \\
 0.562354 \\
 0.56236 \\
 0.562367 \\
 0.562374 \\
 0.562381 \\
 0.562387 \\
 0.562394 \\
 0.562401 \\
 0.562407 \\
 0.562414 \\
 0.56242 \\
 0.562427 \\
 0.562433 \\
 0.562439 \\
 0.562445 \\
 0.562452 \\
 0.562458 \\
 0.562464 \\
 0.56247 \\
 0.562476 \\
 0.562482 \\
 0.562488 \\
 0.562494 \\
 0.5625 \\
 0.562506 \\
 0.562511 \\
 0.562517 \\
 0.562523 \\
 0.562529 \\
 0.562534 \\
 0.56254 \\
 0.562545 \\
 0.562551 \\
 0.562556 \\
 0.562562 \\
 0.562567 \\
 };
  \addplot [mark=none] table[x expr=\coordindex,y index=0]{
 0.3115448\\ 0.3926246\\ 0.4356246\\ 0.4616267\\ 0.4785583\\ 0.4901590\\ 0.4984166\\ 0.5044742\\ 0.509028\\ 0.5125215\\ 0.5152479\\ 0.5174070\\ 0.5191386\\ 0.5205426\\ 0.5216920\\ 0.5226408\\ 0.5234300\\ 0.5240906\\ 0.5246470\\ 0.5251178\\ 0.5255182\\ 0.5258599\\ 0.5261527\\ 0.5264042\\ 0.5266209\\ 0.5268080\\ 0.5269698\\ 0.5271100\\ 0.5272315\\ 0.5273369\\ 0.5274283\\ 0.5275076\\ 0.5275764\\ 0.5276359\\ 0.5276873\\ 0.5277316\\ 0.5277696\\ 0.5278021\\ 0.5278297\\ 0.5278531\\ 0.5278726\\ 0.5278887\\ 0.5279018\\ 0.5279122\\ 0.5279203\\ 0.5279262\\ 0.5279303\\ 0.5279326\\ 0.5279335\\ 0.5279330\\ 0.5279313\\ 0.5279286\\ 0.5279248\\ 0.5279203\\ 0.5279150\\ 0.5279090\\ 0.5279024\\ 0.5278953\\ 0.5278878\\ 0.5278798\\ 0.5278714\\ 0.5278628\\ 0.5278538\\ 0.5278446\\ 0.5278352\\ 0.5278256\\ 0.5278159\\ 0.5278060\\ 0.5277960\\ 0.5277859\\ 0.5277758\\ 0.5277656\\ 0.5277553\\ 0.5277451\\ 0.5277348\\ 0.5277245\\ 0.5277142\\ 0.5277039\\ 0.5276937\\ 0.5276835\\ 0.5276733\\ 0.5276632\\ 0.5276531\\ 0.5276431\\ 0.5276331\\ 0.5276232\\ 0.5276133\\ 0.5276035\\ 0.5275938\\ 0.5275841\\ 0.5275746\\ 0.5275651\\ 0.5275556\\ 0.5275463\\ 0.5275370\\ 0.5275279\\ 0.5275188\\ 0.5275097\\ 0.5275008\\ 0.5274919\\ 0.5274832\\ 0.5274745\\ 0.5274659\\ 0.5274573\\ 0.5274489\\ 0.5274405\\ 0.5274322\\ 0.5274240\\ 0.5274159\\ 0.5274079\\ 0.5273999\\ 0.5273920\\ 0.5273842\\ 0.5273765\\ 0.5273689\\ 0.5273613\\ 0.5273538\\ 0.5273464\\ 0.5273391\\ 0.5273318\\ 0.5273246\\ 0.5273175\\ 0.5273105\\ 0.5273035\\ 0.5272966\\ 0.5272897\\ 0.5272830\\ 0.5272763\\ 0.5272697\\ 0.5272631\\ 0.5272566\\ 0.5272502\\ 0.5272438\\ 0.5272375\\ 0.5272313\\ 0.5272251\\ 0.5272190\\ 0.5272129\\ 0.5272069\\ 0.527201\\ 0.5271951\\ 0.5271893\\ 0.5271836\\ 0.5271778\\ 0.5271722\\ 0.5271666\\ 0.5271611\\ 0.5271556\\ 0.5271501\\ 0.5271448\\ 0.5271394\\ 0.5271342\\ 0.5271289\\ 0.5271238\\ 0.5271186\\ 0.5271135\\ 0.5271085\\ 0.5271035\\ 0.5270986\\ 0.5270937\\ 0.5270889\\ 0.5270841\\ 0.5270793\\ 0.5270746\\ 0.5270699\\ 0.5270653\\ 0.5270607\\ 0.5270562\\ 0.5270516\\ 0.5270472\\ 0.5270428\\ 0.5270384\\ 0.5270340\\ 0.5270297\\ 0.5270255\\ 0.5270212\\ 0.5270170\\ 0.5270129\\ 0.5270088\\ 0.5270047\\ 0.5270006\\ 0.5269966\\ 0.5269927\\ 0.5269887\\ 0.5269848\\ 0.5269809\\ 0.5269771\\ 0.5269733\\ 0.5269695\\ 0.5269657\\ 0.5269620\\ 0.5269583\\ 0.5269547\\ 0.5269511\\ 0.5269475\\ 0.5269439\\0.5269404\\ 0.5269369\\ 0.5269334\\ 0.5269299\\ 0.5269265\\ 0.5269231\\ 0.5269198\\ 0.5269164\\ 0.5269131\\ 0.5269098\\ 0.5269066\\ 0.5269033\\ 0.5269001\\ 0.5268969\\ 0.5268938\\ 0.5268906\\ 0.5268875\\ 0.5268844\\ 0.5268814\\ 0.5268783\\ 0.5268753\\ 0.5268723\\ 0.5268693\\ 0.5268664\\ 0.5268635\\ 0.5268606\\ 0.5268577\\ 0.5268548\\ 0.5268520\\ 0.5268492\\ 0.5268464\\ 0.5268436\\ 0.5268408\\ 0.5268381\\ 0.5268354\\ 0.5268327\\ 0.5268300\\ 0.5268274\\ 0.5268247\\ 0.5268221\\ 0.5268195\\ 0.5268169\\ 0.5268144\\ 0.5268118\\ 0.5268093\\ 0.5268068\\ 0.5268043\\ 0.5268018\\ 0.5267994\\ 0.5267969\\ 0.5267945\\ 0.5267921\\ 0.5267897\\ 0.5267873\\ 0.5267850\\ 0.5267826\\ 0.5267803\\ 0.5267780\\ 0.5267757\\ 0.5267734\\ 0.5267711\\ 0.5267689\\ 0.5267667\\ 0.5267644\\ 0.5267622\\ 0.5267601\\ 0.5267579\\ 0.5267557\\ 0.5267536\\ 0.5267514\\ 0.5267493\\ 0.5267472\\ 0.5267451\\ 0.5267430\\ 0.5267410\\ 0.5267389\\ 0.5267369\\ 0.5267349\\ 0.5267329\\ 0.5267309\\ 0.5267289\\ 0.5267269\\ 0.5267249\\ 0.5267230\\ 0.5267210\\ 0.5267191\\ 0.5267172\\ 0.5267153\\ 0.5267134\\ 0.5267115\\ 0.5267097\\ 0.5267078\\ 0.5267060\\ 0.5267041\\ 0.5267023\\ 0.5267005\\ 0.5266987\\ 0.5266969\\ 0.5266951\\ 0.5266934\\ 0.5266916\\ 0.5266899\\ 0.5266881\\ 0.5266864\\   
};
    \end{axis}
\end{tikzpicture}}
\hspace{0.05cm}
 \subfloat[$\alpha=0.99$]{

\begin{tikzpicture}[scale=0.8]
    \begin{axis}[
        table/header=false,
        table/row sep=\\,
        xtick=\empty,
       xmin= -1.0,
      xmax=301.0,
       ymin= 0.375,
      ymax=0.58,
      ytick={0.4,0.5},
        extra y ticks={0.56419},
       extra y tick labels={$ \frac{1}{\sqrt{\pi}}$}
    ]
   \addplot[dashed] coordinates {(0,0.56419) (300,0.56419)};
    \addplot [mark=none] table[x expr=\coordindex,y index=0]{
 0.0985137 \\
 0.137954 \\
 0.167319 \\
 0.191347 \\
 0.211898 \\
 0.229938 \\
 0.246046 \\
 0.260607 \\
 0.27389 \\
 0.286094 \\
 0.297371 \\
 0.30784 \\
 0.317597 \\
 0.326721 \\
 0.335279 \\
 0.343324 \\
 0.350905 \\
 0.358062 \\
 0.364831 \\
 0.371243 \\
 0.377326 \\
 0.383105 \\
 0.3886 \\
 0.393832 \\
 0.398819 \\
 0.403577 \\
 0.40812 \\
 0.412461 \\
 0.416614 \\
 0.420588 \\
 0.424395 \\
 0.428044 \\
 0.431544 \\
 0.434902 \\
 0.438127 \\
 0.441226 \\
 0.444204 \\
 0.447069 \\
 0.449825 \\
 0.452479 \\
 0.455035 \\
 0.457498 \\
 0.459872 \\
 0.462162 \\
 0.464371 \\
 0.466503 \\
 0.468561 \\
 0.47055 \\
 0.472471 \\
 0.474328 \\
 0.476124 \\
 0.477861 \\
 0.479541 \\
 0.481168 \\
 0.482743 \\
 0.484268 \\
 0.485746 \\
 0.487178 \\
 0.488567 \\
 0.489913 \\
 0.491219 \\
 0.492486 \\
 0.493715 \\
 0.494909 \\
 0.496068 \\
 0.497193 \\
 0.498287 \\
 0.49935 \\
 0.500383 \\
 0.501388 \\
 0.502364 \\
 0.503315 \\
 0.504239 \\
 0.505139 \\
 0.506015 \\
 0.506868 \\
 0.507699 \\
 0.508508 \\
 0.509297 \\
 0.510065 \\
 0.510814 \\
 0.511544 \\
 0.512257 \\
 0.512951 \\
 0.513629 \\
 0.51429 \\
 0.514936 \\
 0.515566 \\
 0.516182 \\
 0.516783 \\
 0.51737 \\
 0.517943 \\
 0.518504 \\
 0.519051 \\
 0.519587 \\
 0.520111 \\
 0.520623 \\
 0.521123 \\
 0.521613 \\
 0.522093 \\
 0.522562 \\
 0.523021 \\
 0.523471 \\
 0.523911 \\
 0.524343 \\
 0.524765 \\
 0.525179 \\
 0.525584 \\
 0.525981 \\
 0.526371 \\
 0.526753 \\
 0.527127 \\
 0.527494 \\
 0.527854 \\
 0.528207 \\
 0.528554 \\
 0.528894 \\
 0.529227 \\
 0.529555 \\
 0.529876 \\
 0.530192 \\
 0.530502 \\
 0.530806 \\
 0.531105 \\
 0.531399 \\
 0.531687 \\
 0.531971 \\
 0.532249 \\
 0.532523 \\
 0.532792 \\
 0.533057 \\
 0.533317 \\
 0.533573 \\
 0.533825 \\
 0.534073 \\
 0.534316 \\
 0.534556 \\
 0.534792 \\
 0.535024 \\
 0.535252 \\
 0.535477 \\
 0.535699 \\
 0.535917 \\
 0.536131 \\
 0.536343 \\
 0.536551 \\
 0.536756 \\
 0.536958 \\
 0.537157 \\
 0.537353 \\
 0.537546 \\
 0.537737 \\
 0.537925 \\
 0.53811 \\
 0.538292 \\
 0.538472 \\
 0.538649 \\
 0.538824 \\
 0.538997 \\
 0.539167 \\
 0.539335 \\
 0.5395 \\
 0.539664 \\
 0.539825 \\
 0.539984 \\
 0.540141 \\
 0.540296 \\
 0.540449 \\
 0.5406 \\
 0.540749 \\
 0.540896 \\
 0.541041 \\
 0.541184 \\
 0.541326 \\
 0.541466 \\
 0.541604 \\
 0.54174 \\
 0.541875 \\
 0.542008 \\
 0.54214 \\
 0.54227 \\
 0.542398 \\
 0.542525 \\
 0.54265 \\
 0.542774 \\
 0.542897 \\
 0.543018 \\
 0.543137 \\
 0.543256 \\
 0.543373 \\
 0.543488 \\
 0.543603 \\
 0.543716 \\
 0.543828 \\
 0.543938 \\
 0.544048 \\
 0.544156 \\
 0.544263 \\
 0.544369 \\
 0.544474 \\
 0.544577 \\
 0.54468 \\
 0.544781 \\
 0.544882 \\
 0.544981 \\
 0.545079 \\
 0.545177 \\
 0.545273 \\
 0.545368 \\
 0.545463 \\
 0.545556 \\
 0.545648 \\
 0.54574 \\
 0.545831 \\
 0.54592 \\
 0.546009 \\
 0.546097 \\
 0.546184 \\
 0.54627 \\
 0.546356 \\
 0.546441 \\
 0.546524 \\
 0.546607 \\
 0.54669 \\
 0.546771 \\
 0.546852 \\
 0.546932 \\
 0.547011 \\
 0.547089 \\
 0.547167 \\
 0.547244 \\
 0.54732 \\
 0.547396 \\
 0.547471 \\
 0.547545 \\
 0.547619 \\
 0.547692 \\
 0.547764 \\
 0.547836 \\
 0.547907 \\
 0.547977 \\
 0.548047 \\
 0.548116 \\
 0.548185 \\
 0.548253 \\
 0.548321 \\
 0.548387 \\
 0.548454 \\
 0.548519 \\
 0.548585 \\
 0.548649 \\
 0.548713 \\
 0.548777 \\
 0.54884 \\
 0.548903 \\
 0.548965 \\
 0.549026 \\
 0.549087 \\
 0.549148 \\
 0.549208 \\
 0.549267 \\
 0.549326 \\
 0.549385 \\
 0.549443 \\
 0.549501 \\
 0.549558 \\
 0.549615 \\
 0.549671 \\
 0.549727 \\
 0.549783 \\
 0.549838 \\
 0.549892 \\
 0.549947 \\
 0.55 \\
 0.550054 \\
 0.550107 \\
 0.550159 \\
 0.550212 \\
 0.550264 \\
 0.550315 \\
 0.550366 \\
 0.550417 \\
 0.550467 \\
 0.550517 \\
 0.550566 \\
 0.550616 \\
 0.550665 \\
 0.550713 \\
 0.550761 \\
 0.550809 \\
 0.550856 \\
 0.550904 \\
 0.55095 \\
 0.550997 \\
 0.551043 \\
 0.551089 \\
 0.551134 \\
 0.551179 \\
 0.551224 \\
 0.551269 \\
};
\addplot [mark=none] table[x expr=\coordindex,y index=0]{
0.0985137\\0.1379539\\0.1673190\\0.1913474\\0.2118984\\0.2299375\\0.2460459\\0.2606067\\0.2738897\\0.2860935\\0.2973702\\0.3078391\\0.3175962\\0.3267204\\0.3352773\\0.3433223\\0.3509027\\0.3580594\\0.3648280\\0.3712398\\0.3773222\\0.3830998\\0.3885945\\0.3938260\\0.3988120\\0.4035686\\0.4081105\\0.4124510\\0.4166024\\0.4205757\\0.4243814\\0.4280289\\0.4315270\\0.4348840\\0.4381073\\0.4412040\\0.4441806\\0.4470434\\0.4497979\\0.4524495\\0.4550032\\0.4574638\\0.4598356\\0.4621228\\0.4643293\\0.4664588\\0.4685147\\0.4705004\\0.4724188\\0.4742730\\0.4760658\\0.4777996\\0.4794771\\0.4811006\\0.4826723\\0.4841943\\0.4856686\\0.4870973\\0.4884820\\0.4898245\\0.4911266\\0.4923897\\0.4936153\\0.4948049\\0.4959599\\0.4970816\\0.4981711\\0.4992297\\0.5002585\\0.5012587\\0.5022312\\0.5031770\\0.5040971\\0.5049924\\0.5058637\\0.5067120\\0.5075380\\0.5083424\\0.5091260\\0.5098896\\0.5106337\\0.5113590\\0.5120662\\0.5127559\\0.5134286\\0.5140848\\0.5147252\\0.5153502\\0.5159603\\0.5165559\\0.5171377\\0.5177058\\0.5182609\\0.5188033\\0.5193334\\0.5198515\\0.5203580\\0.5208534\\0.5213378\\0.5218116\\0.5222752\\0.5227289\\0.5231729\\0.5236075\\0.5240329\\0.5244495\\0.5248576\\0.5252572\\0.5256487\\0.5260324\\0.5264083\\0.5267768\\0.5271380\\0.5274922\\0.5278394\\0.5281800\\0.5285141\\0.5288418\\0.5291634\\0.5294789\\0.5297886\\0.5300926\\0.5303910\\0.5306840\\0.5309718\\0.5312544\\0.5315320\\0.5318047\\0.5320726\\0.5323359\\0.5325947\\0.5328490\\0.533099\\0.5333448\\0.5335865\\0.5338241\\0.5340579\\0.5342878\\0.534514\\0.5347365\\0.5349555\\0.5351710\\0.5353830\\0.5355918\\0.5357973\\0.5359996\\0.5361988\\0.5363949\\0.5365881\\0.5367784\\0.5369658\\0.5371505\\0.5373324\\0.5375116\\0.5376883\\0.5378624\\0.5380340\\0.5382031\\0.5383698\\0.5385342\\0.5386963\\0.5388562\\0.5390138\\0.5391693\\0.5393226\\0.5394739\\0.5396231\\0.5397703\\0.5399156\\0.5400590\\0.5402004\\0.5403401\\0.5404779\\0.5406139\\0.5407482\\0.5408808\\0.5410117\\0.5411410\\0.5412686\\0.5413947\\0.5415192\\0.5416422\\0.5417637\\0.5418837\\0.5420022\\0.5421193\\0.5422351\\0.5423494\\0.5424624\\0.5425741\\0.5426845\\0.5427936\\0.5429015\\0.5430081\\0.5431135\\0.5432177\\0.5433207\\0.5434226\\0.5435233\\0.5436230\\0.5437215\\0.5438189\\0.5439153\\0.5440106\\0.5441049\\0.5441982\\0.5442905\\0.5443818\\0.5444722\\0.5445616\\0.5446501\\0.5447376\\0.5448242\\0.5449100\\0.5449949\\0.5450789\\0.5451620\\0.5452443\\0.5453258\\0.5454065\\0.5454863\\0.5455654\\0.5456437\\0.5457212\\0.5457980\\0.5458741\\0.5459493\\0.5460239\\0.5460978\\0.5461709\\0.5462434\\0.5463152\\0.5463863\\0.5464567\\0.5465265\\0.5465956\\0.5466641\\0.5467320\\0.5467993\\0.5468659\\0.5469319\\0.5469974\\0.5470622\\0.5471265\\0.5471902\\0.5472534\\0.5473159\\0.5473780\\0.5474395\\0.5475004\\0.5475609\\0.5476208\\0.5476802\\0.5477390\\0.5477974\\0.5478553\\0.5479127\\0.5479696\\0.5480261\\0.5480821\\0.5481376\\0.5481926\\0.5482472\\0.5483014\\0.5483551\\0.5484083\\0.5484612\\0.5485136\\0.5485656\\0.5486171\\0.5486683\\0.5487190\\0.5487694\\0.5488193\\0.5488689\\0.5489181\\0.5489669\\0.5490153\\0.5490633\\0.549111\\0.5491583\\0.5492052\\0.5492518\\0.5492981\\0.5493439\\0.5493895\\0.5494347\\0.5494795\\0.5495241\\0.5495683\\0.5496121\\0.5496557\\0.5496989\\0.5497418\\0.5497844\\0.5498267\\0.5498687\\0.5499104\\0.5499517\\0.5499928\\
};
    \end{axis}
\end{tikzpicture} }
\end{center}  
\caption{\label{comp}Comparison of $\kappa_n(\alpha)$ and $\tilde\kappa_n(\alpha)$ for $n=1,\ldots,300$. }
\end{figure}
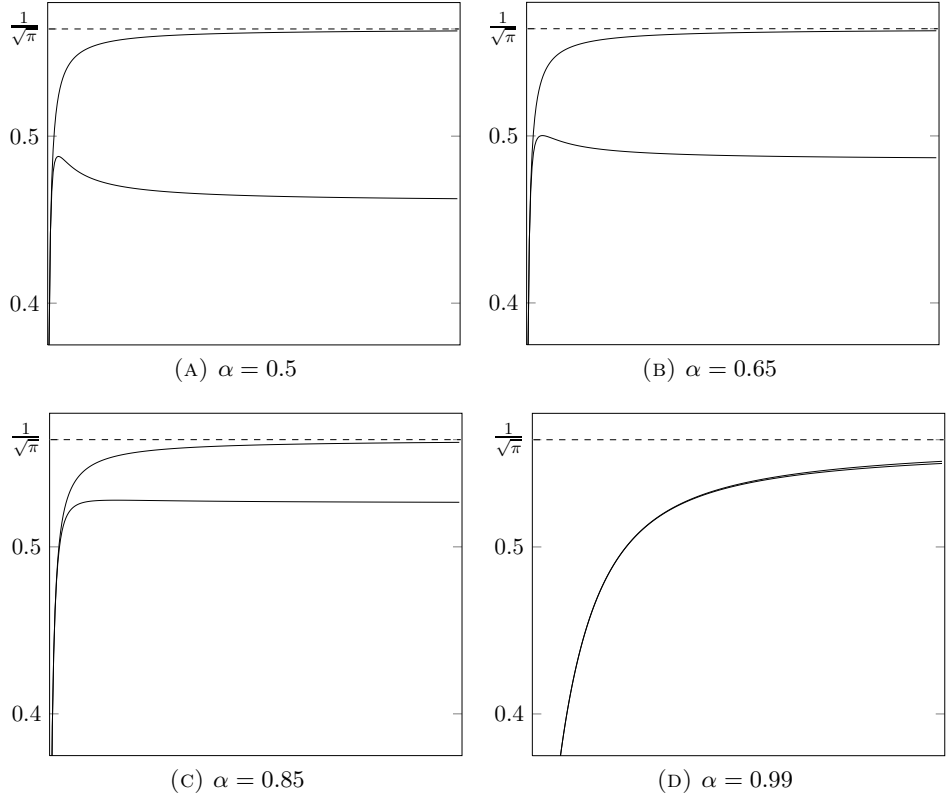

More formally,  using Proposition \ref{lemma:asymp} we get
 \begin{equation}\label{cndn}
0\leq  \tilde\kappa_n(\alpha) - \kappa_n(\alpha)\leq \mbox{$\sqrt{n\,\alpha(1\!-\!\alpha)}\;4n(1\!-\!\alpha)^2/\alpha = 4 n^{3/2}(1\!-\!\alpha)^{5/2}/\sqrt{\alpha}$}
 \end{equation}
while a formula for $\tilde \kappa_n(\alpha)$ involving the hypergeometric function 
$_2{}F_1(-n,\frac{1}{2};2;\cdot)$ obtained in \cite{csv}
gives explicitly (see the proof of \cite[Proposition 8]{csv})
  \begin{equation}\label{cn}
\tilde \kappa_n(\alpha) =\frac{1}{\pi} \int_{0}^{4n\alpha(1-\alpha)}\mbox{$\sqrt{\frac{1}{s} - \frac{1}{4n\alpha(1-\alpha)}}\left (1-\frac{s}{n}\right )^n$} ds.
  \end{equation}
Now, select a sequence $\theta_n\in\;]0,1[$ with $4n^{3/2}(1\!-\!\theta_{n})^{5/2}/\sqrt{\theta_n}\to 0$ and
$4n\theta_n(1\!-\!\theta_{n}) \to\infty$ ({\em e.g.} $\theta_n= 1\!-\! \ln n /n$).
Taking the constant $\alpha$ equal to $\theta_n$  it follows from \eqref{cndn} that $\tilde \kappa_n(\theta_n)-\kappa_n(\theta_n)\to 0$ 
and then \eqref{cn} gives
\begin{equation*}
  \lim_{n \to +\infty} \kappa_n(\theta_n)= \lim_{n \to +\infty} \tilde \kappa_n(\theta_n)=\mbox{$ \frac{1}{\pi}\Gamma \left (\frac{1}{2}\right )=\frac{1}{\sqrt{\pi}}$}
\end{equation*}
which completes the proof of Theorem \ref{thm:main}. \hspace{2ex}$\blacksquare$

\section{Final comments}\label{S4}

According to Theorem \ref{thm:main} the best uniform constant in \eqref{bnd} is $\kappa=1/\sqrt{\pi}$. Since this value 
is attained for $\alpha_n\equiv \alpha\sim 1$, it would be interesting to find the 
asymptotic regularity constant $\kappa=\kappa(\alpha)$ as a function of $\alpha$ for constant sequences
$\alpha_n\equiv\alpha$, and to discover which $\alpha$ yields the best convergence rate.
From Theorems \ref{P1} and \ref{teoint} we know that the recursive bounds $d_{mn}$
provide sharp estimates for arbitrary sequences 
$\alpha_n\in\;]0,1[$, so that  $\kappa(\alpha)=\sup_{n\in\NN} \kappa_n(\alpha)$.
Hence, defining 
\begin{equation}\label{gamma}
\gamma(\alpha)=\frac{\kappa(\alpha)}{\sqrt{\alpha(1\!-\!\alpha)}}=\sup_{n\in\NN} \sqrt{n}\,d_{n,n+1}(\alpha)/\alpha
\end{equation}
we get the convergence rate
$$\|x^n-Tx^n\|\leq\gamma(\alpha)/\sqrt{n}.$$

Using the inside-out algorithm in Remark \ref{inside-out} we computed $\gamma(\alpha)$
to obtain the  plot in Figure \ref{kappa}. This simple looking graph
masks the complexity of the analytic expression of $\gamma(\alpha)$ which involves the maximum of 
the terms $\sqrt{n}\,d_{n,n+1}(\alpha)/\alpha$. 
Note that the graph of $\gamma(\cdot)$ is slightly asymmetrical with respect to $\alpha=1/2$, which
is due to the fact that the optimal transports computed by the inside-out algorithm have a more complex 
structure for $\alpha<1/2$ than for $\alpha\geq 1/2$. 
In fact, from the inside-out algorithm it follows
that $d_{n,n+1}(\alpha)/\alpha$ is a polynomial for $\alpha\geq 0$ whereas it is piecewise polynomial for $\alpha<1/2$.
Hence $\gamma(\alpha)$ is formed by polynomial pieces which are glued together quite smoothly.

For the original iteration of Krasnosel'ski\v{\i} \cite{kra} with $\alpha=1/2$, the supremum 
in \eqref{gamma} seems to be attained at $n=8$ (see  Figure \ref{comp}{\sc a}). If this is the case,
then for $\alpha\geq 1/2$ the inside-out algorithm gives 
\begin{eqnarray*}
d_{8,9}(\alpha)/\alpha\!\!&=\!\!&1 - 8 \,\alpha + 64 \,\alpha^2 - 448 \,\alpha^3 + 2835 \,\alpha^4 - 16008 \,\alpha^5 + 79034 \,\alpha^6 - 334908 \,\alpha^7 + 1201873 \,\alpha^8\\
&&{}- 3622324 \,\alpha^9 + 9129380 \,\alpha^{10}- 19214722 \,\alpha^{11} +33796129 \,\alpha^{12} - 49776610 \,\alpha^{13} + 61566687 \,\alpha^{14} \\
&&{}- 64152608 \,\alpha^{15} \!+56488500 \,\alpha^{16}\! - 42133404 \,\alpha^{17} \!+ 26651679 \,\alpha^{18}\! - 14288252 \,\alpha^{19}\!+6472429 \,\alpha^{20}\\
&&{}- 2462126 \,\alpha^{21} + 778478 \,\alpha^{22} - 201354 \,\alpha^{23} + 41584 \,\alpha^{24} -6604 \,\alpha^{25} + 758 \,\alpha^{26} - 56 \,\alpha^{27} + 2 \,\alpha^{28},
\end{eqnarray*}
which would yield the following exact value for the sharp rate in Krasnosel'sk\v{\i}i's iteration 
$$\gamma(0.5)=\frac{46302245}{67108864}\sqrt{2}=\frac{5\cdot11\cdot 841859}{2^{26}}\sqrt{2}\sim 0.9757468.$$
Figure \ref{kappa} could give the impression that this rate is optimal. However more a careful 
look around $\alpha=1/2$ reveals that this is not
the case, and numerically we get smaller values such as $\gamma(0.48121)\sim0.974637$. 
The difference is quite small, though, and Karsnosel'ski\v{\i}'s original iteration seems close to 
optimal.

While throughout this paper we focused on finding the best uniform bounds 
of the form \eqref{bnd}, it would also be interesting to study the asymptotic rates 
by determining whether the limit $\kappa_\infty(\alpha)=\lim_{n\to\infty}\kappa_n(\alpha)$ exists. We computed 
$\kappa_n(\alpha)$ for different values of $\alpha$ and up to $n=50000$, which 
seems to provide a reasonable approximation for $\kappa_\infty(\alpha)$.
In particular, we found $\kappa_\infty(0.5)\sim\sqrt{2/3\pi}$ which is 
consistent with an observation in \cite{bb2}. A formal treatment of these questions 
remains open.

\begin{figure}[h]
\centering
\begin{tikzpicture}[scale=0.8]
    \begin{axis}[
        width  = 12cm,
        height = 8cm,
        table/header=false,
        table/row sep=\\,
        ymin=0,
        ymax=6,
        xmin=0,
        xmax=1,
        xtick={0,0.5,1},
        ytick={2,4,6},
        extra y ticks={0.9757468},  
       extra y tick labels={$\gamma(0.5)$}
    ]
 \addplot[dashed] coordinates {(0,0.9757468) (1,0.9757468)};
\addplot[color=black] table[mark=none,x expr=(\coordindex+1)*0.01,y index=0,y expr={\thisrowno{0}/sqrt(x*(1-x))}]{
0.5214545\\0.5199891\\0.5185473\\0.5171261\\0.5157268\\0.5143459\\0.5129954\\0.5116571\\0.5103467\\0.5090925\\0.5078599\\0.5066456\\0.5054385\\0.5042966\\0.5030907\\
0.5020848\\0.5009382\\0.4999543\\0.4989677\\0.4979039\\0.4970266\\0.4961949\\0.4952774\\0.4943857\\0.4936431\\0.4929464\\0.4922643\\0.4916175\\0.4910127\\0.4904743\\
0.4899118\\0.4893371\\0.4888574\\0.4883546\\0.4878638\\0.4875968\\0.4874033\\0.4872202\\0.4871228\\0.4870714\\0.4870223\\0.4869617\\0.4868875\\0.4867979\\0.4866919\\
0.4866612\\0.4867970\\0.4869538\\0.4873248\\0.4878734\\0.4884809\\0.4891106\\0.4897611\\0.4904432\\0.4912013\\0.4919776\\0.4927704\\0.4935782\\0.4944342\\0.4953359\\
0.4962496\\0.4971745\\0.4981708\\0.4991757\\0.5002019\\0.5012785\\0.5023597\\0.5034987\\0.5046460\\0.5058397\\0.5070576\\0.5083033\\0.5095841\\0.5109002\\0.5122479\\
0.5136331\\0.5150532\\0.5165085\\0.5180032\\0.5195401\\0.5211186\\0.5227444\\0.5244190\\0.5261470\\0.5279335\\0.5297849\\0.5317092\\0.5337168\\0.5358212\\0.5380407\\
0.540400593\\0.54289094\\0.545420492\\0.547985478\\0.550586058\\0.553223486\\0.555896467\\0.55861708\\0.56135809\\};
\end{axis}
\end{tikzpicture}
\caption{\label{kappa} The rate $\gamma(\alpha)$ as a function of $\alpha$ (for $0.01\leq\alpha\leq 0.99$).}
\end{figure}
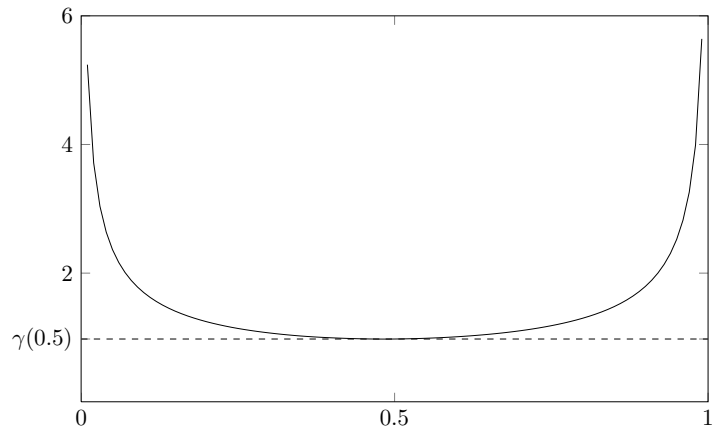

\pagebreak

\appendix

\section{Proof of Theorem \ref{t1}}
\label{appendixA}

\vspace{2ex}
\noindent (a) \underline{\em The function  $d(m,n)=d_{mn}$ defines a distance on the set $\mathcal N$.}

\begin{proof}

We already noted that $d_{mn}\geq 0$ and $d_{mn}=d_{nm}$ for all $n,m\in\mathcal N$, so we only have to 
prove the triangle inequality  and that $d_{mn}=0$ iff $m=n$.
We will show inductively that these properties hold for $m,n\leq\ell$ for each $\ell\in \mathcal N$. 
The base case $\ell=-1$ is trivial. Suppose that both properties hold for $\ell-1$ and let us prove them for $\ell$.

\vspace{1ex}

\noindent\underline{\em For $m,n\leq\ell$ we have $d_{mn}=0$ iff $m=n$}: 
From the induction hypothesis it suffices to prove that 
$d_{m\ell}>0$ when $m<\ell$ and $d_{\ell\ell}=0$. For $m<\ell$ take  an optimal 
transport plan $z\in F^{m\ell}$ so that $\sum_{i=0}^mz_{i\ell}=\pi_\ell^\ell=\alpha_\ell> 0$ and we find
$i\in\{0,\ldots,m\}$ with $z_{i\ell}>0$ and then $d_{m\ell}\geq z_{i\ell}d_{i-1,\ell-1}>0$. 
Now, for $m=\ell$ we consider the feasible transport plan $z\in F^{\ell\ell}$ with $z_{ii}=\pi_i^\ell$ and $z_{ij}=0$ for $j\neq i$
so that using the induction hypothesis we get
$$0\leq d_{\ell\ell}\leq \sum_{i=0}^\ell\sum_{j=0}^\ell z_{ij}d_{i-1,j-1}=\sum_{i=0}^\ell\pi_i^\ell d_{i-1,i-1}=0.$$

\vspace{1ex}

\noindent\underline{\em For $m,n,p\leq\ell$ we have $d_{mn}\leq d_{mp}+d_{pn}$}: Let $z^{mp}$ and $z^{pn}$ be optimal transport 
plans for $d_{mp}$ and $d_{pn}$ respectively, and define for $i=0,\ldots,m$ and $j=0,\ldots,n$
$$z_{ij}=\mbox{$\sum_{k=0}^p\frac{z^{mp}_{ik}z^{pn}_{kj}}{\pi_k^p}.$}$$
A straightforward computation shows that $z\in F^{mn}$ and therefore using the induction hypothesis we get
$$\begin{array}{ccl}
d_{mn}&\leq& \sum_{i=0}^m\sum_{j=0}^n z_{ij}d_{i-1,j-1}\\[1.5ex]
&=& \sum_{i=0}^m\sum_{j=0}^n\sum_{k=0}^p \frac{z^{mp}_{ik}z^{pn}_{kj}}{\pi_k^p}d_{i-1,j-1}\\[1.5ex]
&\leq& \sum_{i=0}^m\sum_{j=0}^n\sum_{k=0}^p \frac{z^{mp}_{ik}z^{pn}_{kj}}{\pi_k^p}[d_{i-1,k-1}+d_{k-1,j-1}]\\[1.5ex]
&=& \sum_{i=0}^m\sum_{k=0}^p \sum_{j=0}^n\frac{z^{mp}_{ik}z^{pn}_{kj}}{\pi_k^p}d_{i-1,k-1}+\sum_{j=0}^n\sum_{k=0}^p  \sum_{i=0}^m\frac{z^{mp}_{ik}z^{pn}_{kj}}{\pi_k^p}d_{k-1,j-1}\\[2.0ex]
&=& \sum_{i=0}^m\sum_{k=0}^p z^{mp}_{ik}d_{i-1,k-1}+\sum_{j=0}^n\sum_{k=0}^p  z^{pn}_{kj}d_{k-1,j-1}\\[1.5ex]
&=& d_{mp}+d_{pn}.
\end{array}
$$
\vspace{-4ex}

\ \hfill\end{proof}

\vspace{2ex}
\noindent (b) \underline{\em  For $m\leq n$ there exists a simple optimal transport plan $z$ for $d_{mn}$ with $z_{ii}=\pi_i^n$  for all $i=0,\ldots,m$.}
\begin{proof}  Let $z$ be an optimal solution for $d_{mn}$ and suppose that $z_{ii}<\pi_i^n$ for some $i\in\{0,\ldots,m\}$.
Choose the smallest of such $i$'s and take $j\in\{0,\ldots,m\}$ with $j\neq i$ and $z_{ji}>0$.
Now, take $k\in\{0,\ldots,n\}$ the smallest $k\neq i$ with $z_{ik}>0$ and
consider a modified transport plan $\tilde z$  identical to $z$ except for   (see Figure \ref{figr})
$$\begin{array}{l}
\tilde z_{ii}=z_{ii}+\varepsilon\\
\tilde z_{ji}=z_{ji}-\varepsilon\\
\tilde z_{ik}=z_{ik}-\varepsilon\\
\tilde z_{jk}=z_{jk}+\varepsilon.
\end{array}
$$
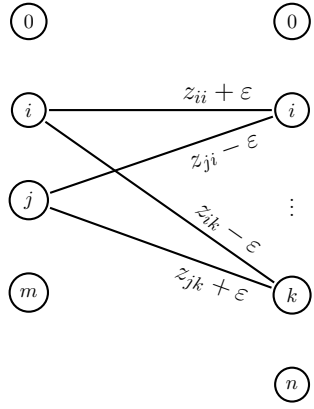
\begin{figure}
\centering
\begin{tikzpicture}[thick,-,shorten >= 1pt,shorten <= 1pt,scale=0.7,every node/.style={scale=0.7}]
\begin{scope}[start chain=going below,node distance=7mm]
 \node[on chain,draw,circle] (0)  {$0$};
 \node[on chain,draw, circle] (i)  {$i$};
 \node[on chain,draw, circle] (j)  {$j$};
\node[on chain,draw,circle] (m) {$m$};
\end{scope}
\begin{scope}[xshift=5cm,yshift=0cm,start chain=going below,node distance=7mm]
 \node[on chain,draw,circle] (00) {$0$};
 \node[on chain,draw,circle] (ii) {$i$};
 \node[on chain] (jj) {$\vdots$};
 \node[on chain, draw, circle] (kk)  {$k$};
 \node[on chain,draw,circle] (nn) {$n$};
\end{scope}

 edge[pil, bend right=45] (market.west)

\draw (i) -- (ii) node [above,near end] {\Large $z_{ii} +\varepsilon$};
\draw (i) -- (kk) node [above, near end, sloped] {\Large $z_{ik} -\varepsilon$};
\draw (j) -- (ii) node [below, near end, sloped] {\Large $z_{ji} -\varepsilon$} ;
\draw (j) -- (kk) node [below, near end, sloped] {\Large $z_{jk} +\varepsilon$};
\end{tikzpicture}
\caption{\label{figr}Redistribution of flow to construct a simple optimal transport}
\end{figure}

\noindent Using the triangle inequality we get the difference in transport cost as
$$C^{mn}(\tilde z)-C^{mn}(z)=\varepsilon(d_{i-1,i-1}+d_{j-1,k-1}-d_{j-1,i-1}-d_{i-1,k-1})\leq 0$$
so that taking $\varepsilon=\min\{z_{ji},z_{ik}\}$ we get a new optimal transport plan $\tilde z$ with 
either $\tilde z_{ik}=0$ or $\tilde z_{ji}=0$ (or both).

Updating $z\gets \tilde z$ and repeating this process iteratively we note that the pair $(i,k)$ increases lexicographically
and can remain constant for at most $m$ iterations. Indeed, for any given $i$ the same $k$ can 
occur only as long as $\tilde z_{ik}$ remains strictly positive, which implies that $\tilde z_{ji}=0$. 
This can occur in at most $m-1$ iterations (one of each $j\neq i$), after which we 
necessarily have either $\tilde z_{ik}=0$ and $k$ must increase in the next iteration, 
or $\tilde z_{ii}=\pi_i^n$, in which case $i$ increases in the next iteration.
It follows that  the process is finite and we eventually reach an optimal solution with 
$z_{ii}=\pi_i^n$ for all $i=0,\ldots,m$.
\end{proof}

\vspace{2ex}
\noindent (c) \underline{\em For fixed $m$ we have that $n\mapsto d_{mn}$ increases for $n\geq m$ and decreases for $n\leq m$. }
\begin{proof}  
Let us prove by induction that $d_{mn}\leq d_{m,n+1}$ for all $n\geq m$. The base case $n=m$ is trivial. 
For the induction step take $z\in F^{m,n+1}$, a simple optimal solution for $d_{m,n+1}$.
We observe that for $j=0,\ldots,n$ we have  $\sum_{i=0}^mz_{ij}=\pi_j^{n+1}=(1\!-\!\alpha_{n+1})\pi_j^n<\pi_j^n$ 
whereas $\sum_{i=0}^mz_{i,n+1}=\pi_{n+1}^{n+1}=\alpha_{n+1}$.
We will transform $z$ into a feasible solution for $d_{mn}$ by redirecting the 
inflow $\alpha_{n+1}$ of node $n+1$ towards all other nodes $j\leq n$ 
in such a way that the inflows $\sum_{i=0}^mz_{ij}$ are increased from $\pi_j^{n+1}$ to $\pi_j^n$, while 
maintaining  the flow balance at the source nodes $\sum_{j=0}^{n+1}z_{ij}=\pi_i^m$. In the process, we make 
sure that all flow shifts reduce the cost.

\begin{figure}[h!]
\centering
\begin{tikzpicture}[thick,-,shorten >= 1pt,shorten <= 1pt,scale=0.5,every node/.style={scale=0.7}]
\begin{scope}[start chain=going below,node distance=6mm]
 \node[on chain,draw,circle] (i0)  {$0$};
 \node[on chain,draw, circle] (i1)  {$i$};
\node[on chain,draw,circle] (im1) {$m$};
\end{scope}
\begin{scope}[xshift=5cm,yshift=0cm,start chain=going below,node distance=6mm]
 \node[on chain,draw,circle] (j0) {$0$};
 \node[on chain,draw, circle] (j1)   {$i$};
\node[on chain,draw,circle] (jm1) {$m$};
 \node[on chain, draw, circle] (jaux) {$k$};
 \node[on chain,draw,circle, scale=0.6] (j2) {$n+1$};
\end{scope}
\draw (i1) -- (jaux)  node [below, midway, sloped] {\Large $z_{ik} -\varepsilon$} ;
\draw (i1) -- (j1) node [above, midway] {\Large $z_{ii} +\varepsilon$} ;

\begin{scope}[xshift=10cm,start chain=going below,node distance=6mm]
 \node[on chain,draw,circle] (ii0)  {$0$};
 \node[on chain,draw, circle] (ii1)  {$i$};
\node[on chain,draw,circle] (iim1) {$m$};
\end{scope}
\begin{scope}[xshift=15cm,yshift=0cm,start chain=going below,node distance=6mm]
 \node[on chain,draw,circle] (jj0) {$0$};
 \node[on chain,draw, circle] (jj1)   {$i$};
\node[on chain,draw,circle] (jjm1) {$m$};
 \node[on chain, draw, circle] (jjaux) {$j$};
 \node[on chain,draw,circle, scale=0.6] (jj2) {$n+1$};
\end{scope}
\draw (ii1) -- (jjaux)  node [above, midway,sloped]{\Large $z_{ij} +\varepsilon$};
\draw (ii1) -- (jj2) node [below, midway, sloped] {\Large $z_{i,n+1} -\varepsilon$} ;

\begin{scope}[xshift=20cm,start chain=going below,node distance=6mm]
 \node[on chain,draw,circle] (ii0)  {$0$};
 \node[on chain] (idost)   {$\vdots$};
\node[on chain,draw,circle] (iin1) {$n$};
 \node[on chain,draw, circle] (ii1) {$i$};
\node[on chain,draw,circle] (iim1) {$m$};
\end{scope}
\begin{scope}[xshift=25cm,yshift=0cm,start chain=going below,node distance=6mm]
 \node[on chain,draw,circle] (jj0) {$0$};
 \node[on chain, draw, circle] (jjaux) {$j$};
 \node[on chain,draw,circle] (jj2) {$n$};
\end{scope}
\draw (ii1) -- (jjaux)  node [above, midway,sloped]{\Large $z_{ij} -\varepsilon$};
\draw (ii1) -- (jj2) node [below, midway, sloped] {\Large $z_{in} +\varepsilon$} ;

\end{tikzpicture}
\caption{\label{figflow} Redistribution of flows to prove monotonicity.} 
\end{figure}
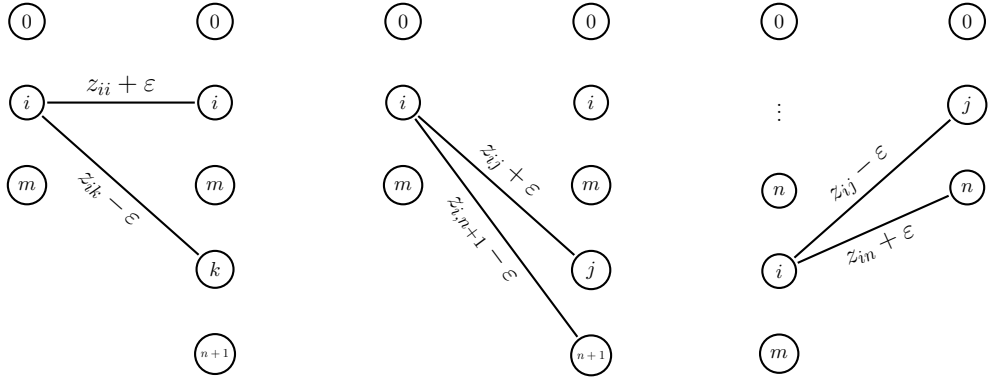

We proceed in two stages.  Firstly, for each $i=0,\ldots,m$ we choose any $k>m$ with $z_{ik}>0$
from which we remove some amount $\varepsilon$ and re-route it to destination $i$ by augmenting 
$z_{ii}$, as shown in Figure \ref{figflow} left. This increases the inflow to destination node $i$ while 
keeping the flow balance at the source node $i$, and the cost is reduced by $\varepsilon(d_{i-1,i-1}-d_{i-1,k-1})<0$. 
 We repeat this process until the inflow to $i$ reaches $\pi_i^n$ as required. In the second stage, for each node $j=m+1,\ldots,n$ we 
 increase its inflow up to $\pi_j^n$ by reducing  the remaining flows $z_{i,n+1}>0$ for $i=0,\ldots,m$ 
and augmenting $z_{ij}$, as shown in Figure \ref{figflow} center. This changes the transportation cost by $\epsilon(d_{i-1,j-1}-d_{i-1,n})$.
Since $i-1\leq j-1\leq n$ the induction hypothesis implies that this cost difference is again negative.
After completing these flow transfers we have $\sum_{i=0}^mz_{ij}=\pi_j^n$ for all $j=0,\ldots,n$
and a fortiori $\sum_{i=0}^mz_{i,n+1}=0$. Hence
the transformed $z$ is feasible for $d_{mn}$ and therefore $d_{mn}\leq C^{mn}(z)$.
Since along the transformation the cost was reduced from its initial value $d_{m,n+1}$, 
we conclude $d_{mn}\leq d_{m,n+1}$, completing the induction step.

\vspace{1ex}
The proof of  $d_{m,n-1}\geq d_{mn}$ for $n\leq m$ is  similar. The base case $n=0$ is trivial since
 $d_{m,-1}=1\geq d_{m0}$. For the induction step, take $z\in F^{m,n-1}$ a simple optimal solution 
 for $d_{m,n-1}$. Let us transform $z$ into a feasible solution for $d_{mn}$ while reducing the cost.
 To this end, for each $j=0,\ldots,n-1$ we take some positive flow $z_{ij}>0$ with $i\geq n$ from which we remove
 a positive amount $\varepsilon$ and reroute it to node $n$ by increasing $z_{in}$, as shown in Figure \ref{figflow} right.
As a result, the cost changes by $\varepsilon(d_{i-1,n-1}-d_{i-1,j-1})$, which is negative by the induction hypothesis
 since $j-1\leq n-1\leq i-1$. We repeat these flow transfers until the inflow $\sum_{i=0}^mz_{ij}$ is reduced from 
 $\pi_j^{n-1}$ to $\pi_j^n$.  The difference $\pi_j^{n-1}-\pi_j^n=\alpha_n\pi_j^{n-1}$ is now redirected to node $n$ and
 therefore $\sum_{i=0}^mz_{in}=\sum_{j=0}^{n-1} \alpha_n\pi_j^{n-1}=\alpha_n=\pi_n^n$. It follows that the transformed flow
 is feasible for $d_{mn}$ and therefore $d_{mn}\leq C^{mn}(z)\leq d_{m,n-1}$.
\end{proof}

In order to establish the 4-point inequality (d) we will exploit inductively 
the fact that the inside-out algorithm in Remark \ref{inside-out} computes 
an optimal transport. More formally, we will use the following

\begin{lemma}\label{prop1}
Suppose that $\alpha_n\geq \frac{1}{2}$ for all $n\in\NN$. Let $n\in \NN$ and assume that the 4-point inequality 
holds for all $0\leq i\leq k\leq j\leq l<n$. Then for all $m\leq n$ the inside-out procedure
computes an optimal transport $z=z^{mn}$ given by 
$$\begin{array}{ccll}
z_{ii}&=&\pi_i^n& \mbox{ for } i = 0, \ldots, m,\\
z_{mj}&=&\pi_j^n &\mbox{ for } j= m+1, \ldots, n-1,\\
z_{in}&=&\pi_i^m - \pi_i^{n}& \mbox{ for } i = 0, \ldots, m-1,\\
z_{mn}&=&\pi_m^m-\sum_{j=m}^{n-1} \pi_j^n.& 
\end{array}$$
\end{lemma}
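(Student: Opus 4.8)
The plan is to establish, in order, three facts: the matrix $z$ displayed in the statement is feasible for \eqref{pmn}; under the standing hypothesis $\alpha_k\ge\tfrac12$ the inside-out procedure outputs precisely this $z$; and $z$ is optimal. For the last point I would produce an explicit dual-feasible vector of potentials complementary to $z$, which dovetails with Remark~\ref{inside-out}: the assumed instance of the $4$-point inequality, valid for all indices strictly below $n$, is exactly what forces the one family of dual constraints that is not a mere triangle inequality.

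I would begin with feasibility. The row sums are immediate: at a source $i<m$ one has $z_{ii}+z_{in}=\pi_i^n+(\pi_i^m-\pi_i^n)=\pi_i^m$, while at source $m$ the four groups of entries add to $\pi_m^n+\sum_{j=m+1}^{n-1}\pi_j^n+\bigl(\pi_m^m-\sum_{j=m}^{n-1}\pi_j^n\bigr)=\pi_m^m$. The column sums at $j\le m$ and at $m<j<n$ are read off directly, and at $j=n$ one combines the telescoping identities $\sum_{j=0}^{n-1}\pi_j^n=1-\alpha_n$ and $\sum_{i=0}^{m}\pi_i^m=1$ to obtain $\sum_{i=0}^{m-1}(\pi_i^m-\pi_i^n)+\bigl(\pi_m^m-\sum_{j=m}^{n-1}\pi_j^n\bigr)=\alpha_n=\pi_n^n$. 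The only nonnegativity that is not automatic is $z_{mn}\ge0$, and this is where the hypothesis is used: $\sum_{j=m}^{n-1}\pi_j^n\le\sum_{j=0}^{n-1}\pi_j^n=1-\alpha_n\le\tfrac12\le\alpha_m=\pi_m^m$. The very same estimate shows that the inside-out procedure returns this $z$: once $z_{ii}=\pi_i^n$ is fixed, the residual supply at node $m$ equals $\alpha_m-\pi_m^n$, which by the displayed inequality is at least $\sum_{j=m+1}^{n-1}\pi_j^n$, so node $m$ alone meets all demands $\pi_j^n$ for $j=m+1,\dots,n-1$; the leftover $\alpha_m-\sum_{j=m}^{n-1}\pi_j^n$ at node $m$ and then the full residuals $\pi_i^m-\pi_i^n$ of the nodes $i=m-1,\dots,0$ are poured into the last destination $n$, and the feasibility count just made guarantees this exhausts every supply and demand.

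For optimality I would dualize \eqref{pmn} in the form $(\mathcal D_{mn})$ and propose the potentials $u_0=0$, $u_i=1-d_{i-1,n-1}$ for $0<i<m$, $u_m=1-d_{m-1,n-1}$, and $u_j=u_m+d_{m-1,j-1}$ for $m<j\le n$ (so that $u_n=1$). By design $u_j-u_i=d_{i-1,j-1}$ on every arc carrying flow, so the complementary slackness \eqref{cs} is automatic, and it remains to verify dual feasibility $|u_j-u_i|\le d_{i-1,j-1}$ for all $i,j\in\{0,\dots,n\}$. Most pairs reduce at once to the triangle inequality of Theorem~\ref{t1}(a), using $d_{-1,k}=1$ wherever a boundary index appears; the only genuinely new case is $0\le i<m$ together with $m<j<n$, where the inequality $u_j-u_i\le d_{i-1,j-1}$ unwinds to $d_{i-1,n-1}+d_{m-1,j-1}\le d_{i-1,j-1}+d_{m-1,n-1}$, which for $i\ge1$ is the $4$-point inequality applied to the quadruple $i-1<m-1<j-1<n-1$ (largest index $n-1<n$, hence within the hypothesis) and for $i=0$ is the monotonicity of Theorem~\ref{t1}(c). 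As an alternative to the dual one can argue by rigidity: Theorem~\ref{t1}(b) gives a simple optimal solution, Remark~\ref{inside-out}---whose uncrossing exchange uses only instances of the $4$-point inequality with largest index $\le n-1$---lets one take it to have no flow-crossing, and then the feasibility bookkeeping above forces any such solution to equal the stated $z$. I expect the case check in the dual-feasibility step---keeping the index orderings straight and choosing, pair by pair, between the triangle inequality, monotonicity, and the $4$-point inequality---to be the only delicate part; the rest is the elementary arithmetic with the weights $\pi_i^n$ described above.
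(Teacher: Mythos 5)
Your proposal is correct, and while your feasibility bookkeeping and your trace of the inside-out procedure rest on exactly the same estimate as the paper's proof (namely $\sum_{j=m}^{n-1}\pi_j^n\le 1-\alpha_n\le\tfrac12\le\alpha_m=\pi_m^m$), your optimality argument takes a genuinely different route. The paper appeals to Remark~\ref{inside-out}: the assumed 4-point inequalities, shifted by one index, justify the uncrossing exchange, so some optimal plan has no crossing flows, and that plan is identified with the inside-out output. You instead certify optimality directly by weak duality, exhibiting the potentials $u_0=0$, $u_i=1-d_{i-1,n-1}$ for $1\le i\le m$, $u_j=u_m+d_{m-1,j-1}$ for $m<j\le n$, which satisfy complementary slackness on the support of $z$ by construction; dual feasibility then reduces, as you say, to the triangle inequality of Theorem~\ref{t1}(a) and monotonicity (c) for most pairs (note that the \emph{reverse} mixed constraints $u_i-u_j\le d_{i-1,j-1}$ with $i\le m<j$ need two applications of the triangle inequality, a detail worth writing out), while the only genuinely new family, $1\le i<m<j<n$, is precisely $d_{i-1,n-1}+d_{m-1,j-1}\le d_{i-1,j-1}+d_{m-1,n-1}$, an instance of the hypothesis with largest index $n-1<n$; I checked that all cases close as claimed. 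What your route buys: a self-contained certificate that makes the role of the 4-point hypothesis completely explicit (it is exactly one family of dual constraints), uses only parts (a) and (c) of Theorem~\ref{t1} plus 4-point inequalities below $n$ (so no circularity with the induction in part (d)), and avoids the step the paper leaves implicit, namely why the non-crossing optimal plan must coincide with the inside-out output. What the paper's route buys is brevity, since Remark~\ref{inside-out} is already in place. Your sketched \emph{rigidity} alternative is essentially the paper's argument and would inherit that same implicit uniqueness step unless you spell out why a simple non-crossing feasible plan is unique.
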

\begin{proof} 
Let $0\leq m\leq n$. Since  the costs  in $(\mathcal P_{mn})$  are the distances $d_{i-1,j-1}$ 
with the nodes shifted by $-1$, we have the 4-point inequalities required to argue, as in
Remark \ref{inside-out}, that there is an optimal transport  in which the flows do not cross, 
which is precisely the one computed by the inside-out algorithm.
Now, this procedure sets $z_{ii}=\pi_i^n$ for $i=0,\ldots,m$.
Next, we note that $z_{mj}=\pi_j^n$ holds for all $j= m, \ldots, n-1$  if and only if the supply
of node $m$ suffices to satisfy the demands of all nodes $j=m, \ldots, n-1$, that is,
$\pi_m^m \geq \sum \nolimits_{j=m}^{n-1}\pi_j^n$. This is precisely the case, since $\pi_m^m=\alpha_m\geq\frac{1}{2}$
while $\sum \nolimits_{j=m}^{n-1}\pi_j^n\leq 1-\pi_n^n=1-\alpha_n\leq\frac{1}{2}$.
Hence we have $z_{mj}=\pi_j^n$ for $j= m+1, \ldots, n-1$ and thus the remaining supplies
are sent to node $n$, namely $z_{in}=\pi_i^m - \pi_i^{n}$  for $i = 0, \ldots, m-1$
and $z_{mn}=\pi_m^m-\sum_{j=m}^{n-1}\pi_j^n$.
 \end{proof}

 \begin{remark}
In the constant case $\alpha_n\equiv \alpha$ the condition $\alpha\geq\frac{1}{2}$ is not only sufficient but
also necessary for the optimal transport to be as decribed in the Lemma. Indeed, in the constant case the 
condition $\pi_m^m \geq \sum \nolimits_{j=m}^{n-1}\pi_j^n$ is equivalent to 
$\alpha\geq\beta-\beta^{n-m+1}$ which holds for all $0\leq m \leq n$
iff $\alpha\geq\frac{1}{2}$.
 \end{remark}

\vspace{2ex}
\noindent
We now proceed to establish the 4-point inequality.
\vspace{2ex}

\noindent (d)  \underline {If $ \alpha_n\geq 1/2$ for all $n \in \mathbb N$, then for all integers $0\leq i\leq k\leq j\leq l$ we have $d_{il}+d_{kj}\leq d_{ij}+d_{kl}$.}

\begin{proof}
For $k=j$ this is the triangle inequality which was already proved, so let us consider the case $k<j$.
Let us fix $i \in \NN$ and let us prove by induction that for all $l > i$ we have
 \begin{equation} \label{4points} \tag{$I_l$}
 d_{il}+d_{kj}\leq d_{ij}+d_{kl}  \quad \text{ for all }  i\leq k\leq j\leq l.
 \end{equation}
The base case $l=i+1$ reads $ d_{i,i+1}+d_{kj}\leq d_{ij}+d_{k,i+1}$ with $k=i$ and $j=i+1$,
which holds trivially. For the induction step, we will exploit the following equivalence.
\begin{claim}\label{claim} For each $\ell\in\NN$ the following are equivalent:
\begin{itemize}
\item[(a)] For all integers $0\leq i\leq k < j\leq l\leq\ell$ we have $d_{il}+d_{kj}\leq d_{ij}+d_{kl}$.
\item[(b)] For all integers $0\leq m<n<\ell$ we have $  d_{m,n+1} + d_{m+1,n} \leq d_{m,n} + d_{m+1,n+1}$.
\end{itemize}
\end{claim}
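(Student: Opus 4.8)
The plan is to prove the two implications in turn, with essentially all of the work going into \emph{(b)$\Rightarrow$(a)}; the reverse implication is immediate. For \emph{(a)$\Rightarrow$(b)} I would fix integers $0\le m<n<\ell$ and split into two cases. If $n\ge m+2$, then the quadruple $(i,k,j,l)=(m,m+1,n,n+1)$ satisfies $0\le i\le k<j\le l\le\ell$, so applying (a) to it yields precisely the elementary inequality $d_{m,n+1}+d_{m+1,n}\le d_{m,n}+d_{m+1,n+1}$ of (b). If $n=m+1$, that inequality reads $d_{m,m+2}\le d_{m,m+1}+d_{m+1,m+2}$, an instance of the triangle inequality already established in part (a). This settles \emph{(a)$\Rightarrow$(b)}.

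The engine for \emph{(b)$\Rightarrow$(a)} is the telescoping identity
\[ d_{ij}+d_{kl}-d_{il}-d_{kj}=\sum_{m=i}^{k-1}\sum_{n=j}^{l-1}\bigl(d_{m,n}+d_{m+1,n+1}-d_{m,n+1}-d_{m+1,n}\bigr), \]
valid for all $i\le k$ and $j\le l$, with the convention that an empty sum is $0$. I would verify it by collapsing the two one-dimensional telescopings in succession: grouping the bracketed term as $(d_{m,n}-d_{m,n+1})+(d_{m+1,n+1}-d_{m+1,n})$ and summing over $n$ from $j$ to $l-1$ gives $d_{m,j}-d_{m,l}+d_{m+1,l}-d_{m+1,j}$; then summing this over $m$ from $i$ to $k-1$ gives $d_{ij}-d_{kj}-d_{il}+d_{kl}$. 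This is a routine computation that uses only that $d$ is a well-defined function on $\mathcal N\times\mathcal N$; none of the special properties of $d$ enter at this stage.

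With the identity in hand, fix integers $0\le i\le k<j\le l\le\ell$ as in (a). Every pair $(m,n)$ occurring in the double sum satisfies $i\le m\le k-1$ and $j\le n\le l-1$, hence $m\le k-1\le j-1\le n-1$, so $m<n$, and moreover $m\ge 0$ and $n\le l-1\le\ell-1<\ell$. Thus each summand is of the form covered by hypothesis (b) and is therefore $\ge 0$, so the right-hand side of the identity is nonnegative, which gives $d_{il}+d_{kj}\le d_{ij}+d_{kl}$, as desired. The only point requiring care is this final bookkeeping step: one must check that the indices of the double sum fall exactly in the range $0\le m<n<\ell$ in which (b) is assumed, which is why the strict inequality $k<j$ (forcing $m<n$) and the bound $l\le\ell$ (forcing $n<\ell$) in the statement of (a) are needed. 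Beyond verifying the index ranges line up, there is no genuine obstacle.
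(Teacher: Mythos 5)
Your proof is correct and follows essentially the same route as the paper: both directions reduce the 4-point inequality to the consecutive-index inequalities of (b) by telescoping --- the paper telescopes in the second index and reads (b) as saying the increments $d_{i,n+1}-d_{i,n}$ increase with the first index, which is exactly your double telescoping identity written out explicitly. Your handling of the edge case $n=m+1$ in (a)$\Rightarrow$(b), where $k<j$ fails and one falls back on the triangle inequality together with $d_{m+1,m+1}=0$, is correct and is in fact slightly more careful than the paper's one-line remark that (b) is a special case of (a).
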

\noindent{\em Proof of Claim \ref{claim}.} Clearly (b) is the special case of (a) with $i\!=\!m$, $k\!=\!m\!+\!1$, $j\!=\!n$, $l\!=\!n\!+\!1$.
In order to show that conversely (b) implies (a), take $0\leq i\leq k < j\leq l \leq \ell$. By telescoping we have 
$d_{il}-d_{ij}=\sum_{n=j}^{l-1} d_{i,n+1}-d_{in}$ and since from
(b) we have that the difference $d_{i,n+1}-d_{in}$ increases with $i$, we deduce
$d_{il}-d_{ij}\leq \sum_{n=j}^{l-1} d_{k,n+1}-d_{kn}=d_{kl}-d_{kj}$ which yields (a).
$\blacksquare$
  \vspace{2ex}

\noindent{\underline{Induction step:}} Let us assume that \eqref{4points} holds.  
In view of Claim \ref{claim}, in order to prove ($I_{l+1}$) it suffices to show that 
for $n=l+1$ one has
\[
\Delta:=d_{m,n}+d_{m+1,n+1}-d_{m,n+1}-d_{m+1,n} \geq 0.
\]
Since $\alpha_n\geq \frac{1}{2}$  and thanks to the induction hypotesis $(I_l)$,
we can use Lemma \ref{prop1} to obtain the following explicit expressions for each of the 
4 terms appearing in $\Delta$: 

\begin{equation*}
\begin{aligned}
d_{m,n}&=\sum_{j=m+1}^{n-1}\pi_j^nd_{m-1,j-1}+\sum_{i=0}^{m-1}(\pi_i^m\!-\!\pi_i^n)d_{i-1,n-1}+(\pi_m^m\!-\!\sum_{j=m}^{n-1}\pi_j^n)d_{m-1,n-1}\\
&=\sum_{j=m+1}^{n}\pi_j^n[d_{m-1,j-1}-d_{m-1,n-1}]+\sum_{i=0}^m(\pi_i^m\!-\!\pi_i^n)d_{i-1,n-1},\\[1ex]
d_{m+1,n+1}&=\sum_{j=m+2}^{n}\pi_j^{n+1}[d_{m,j-1}-d_{m,n}]+\sum_{i=0}^{m+1}(\pi_i^{m+1}\!-\!\pi_i^{n+1})d_{i-1,n}\\
&=\sum_{j=m+2}^{n}\pi_j^{n+1}[d_{m,j-1}-d_{m,n-1}]+\sum_{i=0}^{m+1}(\pi_i^{m+1}\!-\!\pi_i^{n+1})d_{i-1,n}+\sum_{j=m+2}^{n}\pi_j^{n+1}[d_{m,n-1}-d_{m,n}]\\
&=\!\!\sum_{j=m+1}^{n}\pi_j^{n+1}[d_{m,j-1}-d_{m,n-1}]+\sum_{i=0}^{m}(\pi_i^{m+1}\!\!-\!\pi_i^{n+1})d_{i-1,n}+\sum_{j=m+1}^{n}\pi_j^{n+1}[d_{m,n-1}-d_{m,n}]+\pi_{m+1}^{m+1}d_{m,n},\\[1ex]
\end{aligned}
\end{equation*}

\begin{equation*}
\begin{aligned}
d_{m,n+1}&=\sum_{j=m+1}^{n}\pi_j^{n+1}[d_{m-1,j-1}-d_{m-1,n}]+\sum_{i=0}^m(\pi_i^m\!-\!\pi_i^{n+1})d_{i-1,n}\\
&=\sum_{j=m+1}^{n}\pi_j^{n+1}[d_{m-1,j-1}-d_{m-1,n-1}]+\sum_{i=0}^m(\pi_i^m\!-\!\pi_i^{n+1})d_{i-1,n}+\sum_{j=m+1}^{n}\pi_j^{n+1}[d_{m-1,n-1}-d_{m-1,n}],\\[1ex]
d_{m+1,n}&=\sum_{j=m+2}^{n-1}\pi_j^n[d_{m,j-1}-d_{m,n-1}]+\sum_{i=0}^{m+1}(\pi_i^{m+1}\!-\!\pi_i^n)d_{i-1,n-1}\\
&=\sum_{j=m+1}^{n}\pi_j^n[d_{m,j-1}-d_{m,n-1}]+\sum_{i=0}^{m}(\pi_i^{m+1}\!-\!\pi_i^n)d_{i-1,n-1}+\pi_{m+1}^{m+1}d_{m,n-1}.
\end{aligned}
\end{equation*}
Replacing these expressions in $\Delta$ and rearranging terms we can rewrite $\Delta=A+B+C$ with
\begin{equation*}
\begin{aligned}
A&=\sum_{j=m+1}^{n}(\pi_j^n-\pi_j^{n+1})[d_{m-1,j-1}-d_{m-1,n-1}-d_{m,j-1}+d_{m,n-1}]\geq 0\\
B&=\sum_{i=0}^m (\pi_i^m\!-\!\pi_i^{m+1})[d_{i-1,n-1} -d_{i-1,n}+d_{m-1,n}-d_{m-1,n-1}]\geq 0\\
C&=\sum_{i=0}^m (\pi_i^m\!-\!\pi_i^{m+1})[d_{m-1,n-1}-d_{m-1,n}]+\sum_{j=m+1}^{n}\pi_j^{n+1}[d_{m,n-1}-d_{m,n}]+\pi_{m+1}^{m+1}d_{m,n}\\
&{\ \ \ \ }-\sum_{j=m+1}^{n}\pi_j^{n+1}[d_{m-1,n-1}-d_{m-1,n}]-\pi_{m+1}^{m+1}d_{m,n-1}\\
&=(\pi_{m+1}^{m+1}-\sum_{j=m+1}^{n}\pi_j^{n+1})[d_{m-1,n-1}-d_{m-1,n}-d_{m,n-1}+d_{m,n}]\geq 0,
\end{aligned}
\end{equation*}
where the positivity of the three quantities $A, B, C$ follows from the induction hypothesis.
\end{proof}


\section{Proof of Proposition \ref{lemma:asymp}}\label{prob}

\noindent
\underline{For $\alpha_n \equiv \alpha \in [\frac{1}{2},1[$ we have $0\leq c_{mn}-d_{mn}\leq 4m(1-\alpha)^2$.}

\begin{proof}
Starting from the estimate \eqref{infnorm}, a simple application of the Mean Value Theorem yields
\begin{equation}\label{fin}
|c_{mn}-d_{mn}|\leq m |\!|\!|\mathbf D-\mathbf C|\!|\!|_\infty
\end{equation}
so it suffices to prove that $ |\!|\!|\mathbf D - \mathbf C |\!|\!|_{\infty} \leq 4(1\!-\!\alpha)^2$.
Hence, denoting $\beta = 1- \alpha$ we must show that for each $s\in\mathcal S$ the sum 
$\Gamma(s)=\sum_{s'\in\mathcal S}| \mathbf D(s,s')-\mathbf C(s,s')|$ is at most $4\beta^2$.

For the absorbing states we have $\Gamma(f)=\Gamma(h)=0$ so we just consider
a transient state $s=mn\in\mathcal S$.
For $s'=f$ we have
$\mathbf D(mn,f) =\sum_{i=0}^m \pi_i^n= \mathbf C(mn,f)$, whereas
for $s'=h$ we have
\begin{equation*}
  \mathbf D(mn,h)= \pi_0^m -\pi_0^n=\pi_0^m\sum_{j=m+1}^n \pi_j^n= \mathbf C(mn,h). 
\end{equation*}
It follows that in the sum $\Gamma(mn)$ we can ignore the terms $s'=f,h$ so
\begin{equation}\label{sumita}
\Gamma(mn)=\sum_{kl\in\mathcal S}| \mathbf D(mn,kl)-\mathbf C(mn,kl)|=\sum_{1\leq i\leq m<j\leq n}|z^{mn}_{ij}-\tilde z^{mn}_{ij}|
\end{equation}
where $z^{mn}$ is the optimal transport described in Lemma \ref{prop1} and $\tilde z^{mn}$ is given by \eqref{ztilde}.

Using the expressions of $z^{mn}$ and $\tilde z^{mn}$ we get explicitly
  \begin{equation*}
     \vert z^{mn}_{ij} -  \tilde z^{mn}_{ij} \vert = 
\begin{cases}
  \alpha^2 \beta^{m-i+n-j} & \text{ if } i=1,\ldots,m-1 \text { and } j=m+1, \ldots,n-1,\\
\alpha \beta^{m-i+1}(1-\beta^{n-m-1}) & \text{ if } i=1,\ldots,m-1 \text { and } j=n,\\
\alpha \beta^{n-j+1} &\text{ if } i=m \text { and } j=m+1,\ldots,n-1,\\
\beta^2 (1-\beta^{n-m-1}) &\text{ if } i=m \text { and } j=n,\\
0 & \text{ otherwise}
\end{cases}
  \end{equation*}
which replaced into \eqref{sumita} yield the conclusion
\begin{equation*}
\Gamma(mn) =\alpha^2  \sum_{i=1}^{m-1}\sum_{j=m+1}^{n-1}\beta^{m-i+n -j} +\alpha (1\!-\!\beta^{n-m-1})\sum_{i=1}^{m-1} \beta^{m-i+1} 
+ \alpha \sum_{j=m+1}^{n-1}\beta^{n-j+1} + \beta^2 (1\!-\!\beta^{n-m-1})\leq 4\beta^2.
\end{equation*}
\vspace{-4ex}

\ \hfill \end{proof}

\end{document}